\numberwithin{equation}{section}
\numberwithin{theorem}{section}
\numberwithin{lemma}{section}
\numberwithin{remark}{section}
\newtheorem{thm}{Theorem}
\newtheorem{lem}[thm]{Lemma}
\newtheorem{defn}{Definition}
\newtheorem{rem}{Remark}
\begin{document}

\title{A hybridized discontinuous Galerkin method for 2D fractional convection-diffusion equations}


\author{Shuqin Wang, Jinyun Yuan, \\ Weihua Deng, Yujiang Wu
}


\institute{
Shuqin Wang \at
              School of Mathematics and Statistics, Gansu Key Laboratory of Applied Mathematics and Complex
Systems, Lanzhou University, Lanzhou 730000, People's Republic of China.  \\
            Department of Mathematics, Federal University of Paran\'a, Centro Polit\'{e}cnico, CP: 19.089, Curitiba, CEP: 81531-980, PR, Brazil.
            \email{wsqlzu@gmail.com} 
           \and
         Jinyun Yuan \at
         Department of Mathematics, Federal University of Paran\'a, Centro Polit\'{e}cnico, CP: 19.089, Curitiba, CEP: 81531-980, PR, Brazil.
         \email{yuanjy@gmail.com}
         \and
         Weihua Deng\at School of Mathematics and Statistics, Gansu Key Laboratory of Applied Mathematics and Complex
Systems, Lanzhou University, Lanzhou 730000, People's Republic of China.
\email{dengwh@lzu.edu.cn}
\and
         Yujiang Wu\at School of Mathematics and Statistics, Gansu Key Laboratory of Applied Mathematics and Complex
Systems, Lanzhou University, Lanzhou 730000, People's Republic of China.
\email{myjaw@lzu.edu.cn}
}

\date{Received: date / Accepted: date}
\maketitle

\begin{abstract}
A hybridized discontinuous Galerkin method is proposed for solving 2D fractional  convection-diffusion equations containing derivatives of fractional order in space on a finite domain.
The Riemann-Liouville derivative is used for the spatial derivative. Combining the characteristic method and the hybridized discontinuous Galerkin method, the symmetric variational formulation is constructed. The stability of the presented scheme is proved. Theoretically, the order of $\mathcal{O}(h^{k+1/2}+\Delta t)$ is established for the corresponding models and numerically the better convergence rates are detected by carefully choosing the numerical fluxes.
Extensive numerical experiments are performed to illustrate the performance of the proposed schemes. The first
numerical example is to display the convergence
orders, while the second one justifies the benefits of the schemes. Both are tested with triangular meshes.
\\
\\
\noindent{\bf AMS} 26A33, 35R11, 65M60, 65M12.
\keywords{2D fractional convection-diffusion equations; hybridized discontinuous Galerkin method;  symmetric variational formula; triangular meshes.}
\end{abstract}
\section{Introduction}
Fractional differential equations (FDEs) have become more and more popular in applied science and engineering
field recently. The history and mathematical background of fractional differential operators are given in \cite{r24} with definitions and applications of fractional calculus. This kind of equations has been used increasingly in many fields, for example, in Nature \cite{r25} fractional operators applied in fractal stream chemistry and its  implications for contaminant transport in catchments,
in \cite{r26} the fractional calculus motivated into bioengineering, and its application as a model for physical phenomena exhibiting anomalous diffusion, L$\acute{e}$vy motion, turbulence
\cite{r27,r28,r29}, etc.

 Let us briefly review the development of numerical methods for the fractional convection-diffusion equations. Several authors have proposed a variety of high-order finite difference schemes for solving time-fractional
 convection-diffusion equations, for example \cite{r16,r23,r18,r15}, and solving space-fractional convection-diffusion equations \cite{r11,r14}. In \cite{r20,r21,r19}, W. Mclean and K. Mustapha have used the piecewise-constant and piecewise-linear discontinuous Galerkin (DG) methods to solve the time-fractional diffusion and wave
 equations, respectively. But these methods require more computational costs. In order to tackle those problems, in \cite{r22} W. Mclean has proposed an efficient scheme called fast summation by interval clustering to reduce the implementation memory; more recent works on this issue can been in \cite{r6-1,r19-1}. Furthermore, in \cite{r9} Deng and Hesthaven have developed DG methods for fractional
 spatial derivatives and given a fundamental frame to combine the DG  methods with fractional operators. In \cite{r10} Xu and Hesthaven have applied the DG methods to the fractional convection-diffusion equations in one dimension. In the two dimensional case, Ji and Tang \cite{r7} have applied the DG methods to recast the fractional diffusion equations in rectangular meshes with the numerically optimal convergence order $\mathcal{O}(h^{k+1})$. However, there are no theoretical results. So far very few literatures deal with the fractional problems in triangular meshes, besides \cite{r30}. This motives us to consider a successful DG method for solving the fractional problems in triangular meshes.

Here, we consider the time-dependent space-fractional convection-diffusion problem
\begin{equation}\label{eq1}
\centering
  \begin{cases}
\partial_t u+\bm b\cdot\nabla u-\frac{\partial^\alpha u}{\partial x^\alpha}-\frac{\partial^\beta u}
  {\partial y^\beta}=f, & (x,y,t)\in\Omega\times J,\cr
  u(x,y,0)=u_{0}(x,y),& (x,y) \in\Omega,\cr
   u(x,y,t)=0,& (x,y,t)\in\partial
   \Omega\times J,
\end{cases}
\end{equation}
in the domain $\Omega=(a,b)\times(c,d)$ and $J=[0,T]$ with the superdiffusion operators which are defined by the left Riemann-Liouville fractional derivatives
$\frac{\partial^\alpha u}{\partial x^\alpha}$ and $\frac{\partial^\beta u}
{\partial y^\beta}$, $1<\alpha,\beta<2$.
The function
$f\in L^2(J;L^2(\Omega))$ is a source term; the convection coefficient
 $\bm b$ is supposed to satisfy $\bm b\in L^\infty(J;W^{1,\infty}(\Omega)^2)$, and the initial function $u\in L^2(\Omega)$.

In this work, we shall design a stable and accurate DG method for (\ref{eq1}). The stability and convergence  are proved in multi-dimensional case. This development is built on the extension of the previous DG works found in \cite{r9,r10}, where a qualitative study of the high-order local DG methods was discussed and some theoretical results were offered
in one space dimension. In order to perform the error analysis, the authors defined
some projection operators to prove the convergence results. Unfortunately, the defined projection operators can not be easily extended to two dimensional case (see \cite{r9,r10}). Hence, to avoid this difficulty, a different DG method is designed in this paper by carefully choosing the numerical fluxes and adding penalty terms. The presented hybridized discontinuous Galerkin (HDG) method has the following attractive properties:
1) The HDG method can be used for other fractional problems, for example, fractional diffusion equations; 2) It has excellent provable stability, i.e., the stability can be proved in any space dimensions; 3) Theoretically, the error analysis can be more easily performed with the general analytical methods in any space dimensions.

The outline of this paper is as follows. In Section 2, we introduce some basic definitions, notations and review a few lemmas which are useful for the following analysis. In Section 3, we present the computational schemes and give some discussions. In Section 4, we perform the stability and convergence analysis for the 2D space-fractional convection-diffusion equations. In Section 5, we make the numerical experiments and show some simulation results to verify the theoretical results and illustrate the performance of the proposed schemes. We conclude the paper with some remarks in the last section.


\section{Preliminaries}

In the following we give some definitions of fractional integrals, derivatives, and their properties.
\begin{defn}[\cite{r24}]\label{definition2.1}
 For any $\mu>0$, the left and right Riemann-Liouville fractional
 integrals of function $u(x)$ defined on $(a,b)$ are defined by
\begin{align*}
_a\!I_x^{\mu}u(x)=\int_a^x\frac{(x-\xi)^{\mu-1}}
{\Gamma(\mu)}u(\xi)d\xi,
\end{align*}
and
\begin{align*}
_x\!I_b^{\mu}u(x)=\int_x^b\frac{(\xi-x)^{\mu-1}}
{\Gamma(\mu)}u(\xi)d\xi.
\end{align*}
\end{defn}

\begin{defn}[\cite{r24}]\label{definition2.2}
 For any $\mu>0, n-1<\mu< n, n\in N^+$,
the left and right Riemann-Liouville fractional derivatives of function $u$ defined on $(a,b)$ are defined by
\begin{align*}
_{a}\!D_x^\mu u(x)=\frac{d^n}{dx^n}
\int_a^x\frac{(x-\xi)^{n-\mu-1}}{\Gamma(n-\mu)}u(\xi)d\xi,
\end{align*}
and
\begin{align*}
~~~~~~~~~~~{_{x}\!}D_{b}^{\mu}u(x)=(-1)^n\frac{d^n}{dx^n}
\int_x^b\frac{(\xi-x)^{n-\mu-1}}{\Gamma(n-\mu)}u(\xi)d\xi.
\end{align*}
\end{defn}

\begin{defn}[\cite{r24}]\label{definition2.5}
 For any $\mu>0, n-1<\mu< n, n\in N^+$, Caputo's left and right fractional
derivatives of function $u(x)$ on $(a,b)$ are defined by
\begin{align*}
_{a}^{C}\!D_{x}^{\mu}u(x)&=
\int_{a}^{x}\frac{(x-\xi)^{{n-\mu-1}}}{\Gamma(n-\mu)}
\frac{d^n u(\xi)}{d\xi^n}d\xi,
\end{align*}
and
\begin{align*}
~~~~~~~~_{x}^{C}\!D_{b}^{\mu}u(x)&=
\int_{x}^{b}\frac{(\xi-x)^{{n-\mu-1}}}{\Gamma(n-\mu)}
\frac{(-1)^nd^n u(\xi)}{d\xi^n}d\xi.
\end{align*}
\end{defn}

\begin{lem}[Adjoint property \cite{r9,r13,r10}]\label{lemma2.3}
 For any $\mu>0$, the left and right
Riemann-Liouville fractional integral operators are adjoints for any functions $u(x),v(x)\in L^2(a,b)$, i.e.,
\begin{align}
\int_a^b {_a}\!I_x^{\mu}u(x)v(x)dx&=\int_a^b u(x){_x}\!I
_b^{\mu}v(x)dx.
\end{align}
\end{lem}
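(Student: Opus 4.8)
The asserted identity is essentially a consequence of Fubini's theorem applied to the kernel representation of the fractional integral, so the plan is to make that interchange of integration precise. First I would insert the definition of ${_a}\!I_x^{\mu}$ into the left-hand side, writing
\begin{align*}
\int_a^b {_a}\!I_x^{\mu}u(x)\,v(x)\,dx=\int_a^b\!\left(\int_a^x\frac{(x-\xi)^{\mu-1}}{\Gamma(\mu)}u(\xi)\,d\xi\right)v(x)\,dx,
\end{align*}
and observe that the domain of the resulting iterated integral is the triangle $T=\{(x,\xi):a\le\xi\le x\le b\}$.

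The second and only delicate step is to justify that this iterated integral coincides with the double integral of $K(x,\xi)\,u(\xi)\,v(x)$ over $T$, where $K(x,\xi)=(x-\xi)^{\mu-1}/\Gamma(\mu)$, and that the order of integration may be reversed. Since $K\ge 0$, Tonelli's theorem applies provided the double integral of $K(x,\xi)|u(\xi)||v(x)|$ is finite. For $\mu\ge 1$ the kernel is bounded on $T$ and this is immediate from Cauchy--Schwarz; for $0<\mu<1$ the kernel has only an integrable singularity on the diagonal, and one controls the integral by a Schur-type estimate: because $\int_a^x K(x,\xi)\,d\xi=(x-a)^\mu/\Gamma(\mu+1)$ and $\int_\xi^b K(x,\xi)\,dx=(b-\xi)^\mu/\Gamma(\mu+1)$ are both bounded by $(b-a)^\mu/\Gamma(\mu+1)$, two successive applications of Cauchy--Schwarz give
\begin{align*}
\iint_T K(x,\xi)|u(\xi)||v(x)|\,d\xi\,dx\le\frac{(b-a)^\mu}{\Gamma(\mu+1)}\,\|u\|_{L^2(a,b)}\,\|v\|_{L^2(a,b)}<\infty .
\end{align*}
In particular this also shows ${_a}\!I_x^{\mu}u\in L^2(a,b)$, so the pairing on the left is well defined.

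Having justified the interchange, I would integrate in $x$ first over $\xi\le x\le b$, obtaining
\begin{align*}
\int_a^b u(\xi)\left(\int_\xi^b\frac{(x-\xi)^{\mu-1}}{\Gamma(\mu)}v(x)\,dx\right)d\xi .
\end{align*}
Relabelling the dummy variables, the inner integral is exactly ${_x}\!I_b^{\mu}v(x)$ evaluated at the outer variable, so the right-hand side of the claimed identity appears and the proof is complete. The only genuine obstacle is thus the Fubini/Tonelli justification in the singular range $0<\mu<1$, and even that is mild since $(x-\xi)^{\mu-1}$ is integrable on $T$; everything else is routine bookkeeping.
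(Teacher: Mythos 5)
Your argument is correct and complete. The paper itself states this lemma without proof, citing it from the references of Deng--Hesthaven, Ervin--Roop, and Xu--Hesthaven, and the proof given there is exactly the one you reconstruct: write the left-hand side as a double integral of $K(x,\xi)u(\xi)v(x)$ with $K(x,\xi)=(x-\xi)^{\mu-1}/\Gamma(\mu)$ over the triangle $\{a\le\xi\le x\le b\}$, verify absolute integrability, and swap the order of integration. Your Schur-test bound $\iint_T K|u||v|\le (b-a)^{\mu}\Gamma(\mu+1)^{-1}\|u\|_{L^2}\|v\|_{L^2}$ is the right way to license Tonelli--Fubini uniformly in $\mu>0$ (the case split at $\mu=1$ is not even needed), and it has the added value of showing that ${_a}I_x^{\mu}$ maps $L^2(a,b)$ boundedly into itself, so both pairings in the identity are finite. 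Nothing is missing.
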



\begin{lem}[\cite{r9,r10}]\label{lemma2.6}
 Suppose that $u(x)$ is a function
defined on $(a,b)$, $u^{(k)}(x)=0$ when $x=a$ or $x=b$, $\forall\ 0\leq k\leq n-1~(n-1<\mu< n),\, n\in N^+$. There are
$$ _a\!D^\mu_x u(x)=D^n{_a\!I^{n-\mu}_x}
u(x)={_a\!I^{n-\mu}_x}\big(D^nu(x)\big),$$
or
$$_x\!D^\mu_b u(x)=(-D)^n{ _x\!I^{n-\mu}_b}
u(x)=_x\!I^{n-\mu}_b\big((-D)^n u(x)\big).$$
\end{lem}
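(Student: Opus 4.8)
\emph{Proof plan.} The first equality in each of the two displayed lines, namely ${}_{a}D_{x}^{\mu}u=D^{n}\,{_{a}I_{x}^{n-\mu}}u$ and ${}_{x}D_{b}^{\mu}u=(-D)^{n}\,{_{x}I_{b}^{n-\mu}}u$, is merely a restatement of Definition \ref{definition2.2} (using $(-1)^{n}D^{n}=(-D)^{n}$), so the substance of the lemma is that the integer-order derivative may be moved \emph{inside} the fractional integral once the homogeneous endpoint data $u^{(k)}(a)=u^{(k)}(b)=0$, $0\le k\le n-1$, are at hand; equivalently, one is showing that under these conditions the Riemann--Liouville and Caputo derivatives of Definitions \ref{definition2.2} and \ref{definition2.5} coincide. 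My plan is to move the $n$ derivatives onto $u$ by integrating by parts $n$ times, and then to reconstruct $D^{n}$ by differentiating $n$ times under the integral sign.

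First I would record the elementary fact that, for every $\sigma>1$ and every integrable $g$, differentiation under the integral sign gives $\frac{d}{dx}\,{_{a}I_{x}^{\sigma}}g={_{a}I_{x}^{\sigma-1}}g$, the boundary contribution at $\xi=x$ vanishing because its exponent $\sigma-1$ is positive; symmetrically $\frac{d}{dx}\,{_{x}I_{b}^{\sigma}}g=-\,{_{x}I_{b}^{\sigma-1}}g$. Next, beginning from ${_{a}I_{x}^{n-\mu}}u(x)=\int_{a}^{x}\frac{(x-\xi)^{n-\mu-1}}{\Gamma(n-\mu)}u(\xi)\,d\xi$, I would integrate by parts $n$ times in succession: at the $j$-th step the exponent of the kernel rises by one, so the boundary term at the moving endpoint $\xi=x$ is zero (its exponent $n-\mu+j-1$ is positive), while the boundary term at $\xi=a$ is a constant times $u^{(j-1)}(a)$, which is zero by hypothesis for $1\le j\le n$. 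This gives ${_{a}I_{x}^{n-\mu}}u={_{a}I_{x}^{2n-\mu}}(D^{n}u)$. Because $2n-\mu>n>1$, the right-hand side can be differentiated $n$ times, and applying the rule above $n$ times --- the intermediate orders $2n-\mu,\,2n-\mu-1,\,\dots,\,n-\mu+1$ all being strictly larger than $1$ --- yields $D^{n}{_{a}I_{x}^{2n-\mu}}(D^{n}u)={_{a}I_{x}^{n-\mu}}(D^{n}u)$; together with ${}_{a}D_{x}^{\mu}u=D^{n}{_{a}I_{x}^{n-\mu}}u$ this is the first chain of identities.

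For the right-sided operator the argument is identical with ${_{a}I_{x}}$ replaced by ${_{x}I_{b}}$ and the endpoint data used at $x=b$: each integration by parts of $\int_{x}^{b}\frac{(\xi-x)^{n-\mu-1}}{\Gamma(n-\mu)}u(\xi)\,d\xi$ contributes a sign $-1$ and uses $u^{(j-1)}(b)=0$, so after $n$ steps ${_{x}I_{b}^{n-\mu}}u=(-1)^{n}{_{x}I_{b}^{2n-\mu}}(D^{n}u)$; then, using $\frac{d}{dx}\,{_{x}I_{b}^{\sigma}}g=-\,{_{x}I_{b}^{\sigma-1}}g$ for $\sigma>1$, one obtains $(-D)^{n}{_{x}I_{b}^{2n-\mu}}(D^{n}u)={_{x}I_{b}^{n-\mu}}(D^{n}u)$, whence $(-D)^{n}{_{x}I_{b}^{n-\mu}}u=(-1)^{n}{_{x}I_{b}^{n-\mu}}(D^{n}u)={_{x}I_{b}^{n-\mu}}\big((-D)^{n}u\big)$, and the leftmost member is ${}_{x}D_{b}^{\mu}u$ by Definition \ref{definition2.2}.

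The one step that really needs attention --- hence the main obstacle --- is the bookkeeping in the successive integrations by parts: one must check that \emph{every} boundary term that is generated is annihilated either by a positive power of $(x-\xi)$ (resp.\ $(\xi-x)$) at the moving endpoint or by one of the hypotheses $u^{(k)}(a)=0$ (resp.\ $u^{(k)}(b)=0$) at the fixed endpoint, which is precisely where all $n$ of those conditions enter and where one tacitly assumes $u\in C^{n}([a,b])$ so that $D^{n}u$ makes sense and the integrations by parts as well as the differentiations under the integral sign are legitimate (all the intervening fractional-integration orders remaining strictly above $1$). Once this routine verification is carried out, the lemma follows.
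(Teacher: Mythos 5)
The paper does not prove this lemma at all: it is quoted verbatim from \cite{r9,r10}, so there is no in-paper argument to compare against. Your proof is correct and is the standard one used in those references — integrate by parts $n$ times (each boundary term killed either by the positive exponent at the moving endpoint or by $u^{(j)}(a)=0$, resp.\ $u^{(j)}(b)=0$) to get ${}_aI_x^{n-\mu}u={}_aI_x^{2n-\mu}(D^nu)$, then differentiate $n$ times under the integral sign using $\tfrac{d}{dx}\,{}_aI_x^{\sigma}g={}_aI_x^{\sigma-1}g$ for $\sigma>1$; the sign bookkeeping for the right-sided operator is also handled correctly.
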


Note that, from Definition \ref{definition2.2}, Definition \ref{definition2.5} and Lemma \ref{lemma2.6} if the solution $u$ of (\ref{eq1}) satisfies $u(x,y)=0$ when $x=a$ or $y=c$, then for any $1<\alpha,\beta<2$, the left fractional Riemann-Liouville derivatives of function $u(x,y)$
on $\Omega=(a,b)\times(c,d)$ can be rewritten as (see \cite{r9,r10}):
\begin{align}\label{equation2.4}
\frac{\partial^\alpha u}{\partial x^\alpha}=\frac{\partial}{\partial x}{_a\! I^{2-\alpha}_x } \big(\frac{\partial}{\partial x}u(x,y)\big),
\end{align}
\begin{align}\label{equation2.5}
\frac{\partial^\beta u}{\partial y^\beta}=\frac{-\partial}{\partial y}{_c\! I^{{2-\beta}}_y } \big(\frac{-\partial}{\partial y}u(x,y)\big).
\end{align}
For the convenience, we use the notation
\begin{equation}\label{eq2.6}
I^{\bar{\bm\alpha}}_{\bm x}=({ _a\!I^{\alpha_1}_x},{ _c\!I^{\alpha_2}_y}),
\end{equation}
 where $(\alpha_1,\alpha_2)=(2-\alpha,2-\beta)$ and $\alpha_1,\alpha_2\in(0,1)$.

\begin{defn}[The left and right fractional spaces
\cite{r9}] \label{definition2.7}
For $0<\mu<1$,
extend $u(x)$ outside of
$\mathfrak{I}:=(a,b)$ by zero. Define the norms
\begin{align}
\parallel u\parallel_{J_L^{-\mu}(\mathbb{R})}&:= \parallel _{-\infty}\!I_x^\mu u\parallel_{L^2(\mathbb{R})},
\end{align}
\begin{align}
\parallel u\parallel_{J_R^{-\mu}(\mathbb{R})}&:= \parallel _{x}\!I_\infty^\mu u\parallel_{L^2(\mathbb{R})}.
\end{align}
\end{defn}

Let the two spaces $J_{L}^{-\mu}(\mathbb{R})$ and $J_{R}^{-\mu}(\mathbb{R})$ denote the
closures of $C_{0}^\infty(\mathbb{R})$ with respect to $\parallel\cdot
\parallel_{J_L^{-\mu}}$ and $\parallel\cdot\parallel_{J_R^{-\mu}}$,
respectively.

\begin{lem}[\cite{r9,r13,r10}]\label{lemma2.8}
 For $\mu>0$, assume that $u(x)$ is a real function. Then
\begin{align}
(_{-\infty}\!I^\mu_x u,_{x}\!I^\mu_\infty u)
=cos(\mu\pi)\parallel u\parallel^2_{J_L^{-\mu}(\mathbb{R})}=cos(\mu\pi)\parallel u\parallel^2_{J_R^{-\mu}(\mathbb{R})}.
\end{align}
\end{lem}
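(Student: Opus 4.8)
The plan is to pass to the Fourier transform, where the left and right Riemann--Liouville integrals become multiplication by the conjugate power symbols $(i\omega)^{-\mu}$ and $(-i\omega)^{-\mu}$, and then to reduce the identity to an elementary trigonometric fact. Fix the normalization $\hat u(\omega)=\int_{\mathbb{R}}u(x)e^{-i\omega x}\,dx$. From the Cauchy--Schwarz estimate
\[
\big|({}_{-\infty}\!I^{\mu}_{x} u,\,{}_{x}\!I^{\mu}_{\infty} u)\big|\le\parallel {}_{-\infty}\!I^{\mu}_{x} u\parallel_{L^2(\mathbb{R})}\parallel {}_{x}\!I^{\mu}_{\infty} u\parallel_{L^2(\mathbb{R})}=\parallel u\parallel_{J_L^{-\mu}(\mathbb{R})}\parallel u\parallel_{J_R^{-\mu}(\mathbb{R})}
\]
all three quantities in the statement are continuous with respect to the norms defining $J_L^{-\mu}(\mathbb{R})$ and $J_R^{-\mu}(\mathbb{R})$, so it suffices to establish the identity on the dense subspace $C_0^\infty(\mathbb{R})$ and then pass to the limit.

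For $u\in C_0^\infty(\mathbb{R})$ I would invoke the convolution representations ${}_{-\infty}\!I^{\mu}_{x} u=u*k_\mu$ and ${}_{x}\!I^{\mu}_{\infty} u=u*\tilde k_\mu$ with the causal/anticausal kernels $k_\mu(x)=x_{+}^{\mu-1}/\Gamma(\mu)$ and $\tilde k_\mu(x)=x_{-}^{\mu-1}/\Gamma(\mu)$, together with the classical symbol identities
\[
\widehat{{}_{-\infty}\!I^{\mu}_{x} u}(\omega)=(i\omega)^{-\mu}\hat u(\omega),\qquad \widehat{{}_{x}\!I^{\mu}_{\infty} u}(\omega)=(-i\omega)^{-\mu}\hat u(\omega),
\]
the powers being taken with the principal branch, so that $(i\omega)^{-\mu}=|\omega|^{-\mu}e^{-i\frac{\mu\pi}{2}\operatorname{sgn}\omega}$ and $(-i\omega)^{-\mu}=|\omega|^{-\mu}e^{+i\frac{\mu\pi}{2}\operatorname{sgn}\omega}$; in particular $\overline{(-i\omega)^{-\mu}}=(i\omega)^{-\mu}$ for real $\omega$. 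Parseval's identity then yields
\[
({}_{-\infty}\!I^{\mu}_{x} u,\,{}_{x}\!I^{\mu}_{\infty} u)=\frac{1}{2\pi}\int_{\mathbb{R}}(i\omega)^{-\mu}\hat u(\omega)\,\overline{(-i\omega)^{-\mu}\hat u(\omega)}\,d\omega=\frac{1}{2\pi}\int_{\mathbb{R}}|\omega|^{-2\mu}e^{-i\mu\pi\operatorname{sgn}\omega}|\hat u(\omega)|^2\,d\omega .
\]
Since $u$ is real-valued, $|\hat u|$ is even, so the odd part $-i\sin(\mu\pi)\operatorname{sgn}\omega$ of $e^{-i\mu\pi\operatorname{sgn}\omega}$ integrates to zero and only $\cos(\mu\pi)$ survives. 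A second application of Parseval gives $\parallel u\parallel^2_{J_L^{-\mu}(\mathbb{R})}=\parallel {}_{-\infty}\!I^{\mu}_{x} u\parallel^2_{L^2(\mathbb{R})}=\frac{1}{2\pi}\int_{\mathbb{R}}|\omega|^{-2\mu}|\hat u(\omega)|^2\,d\omega$, and the identical computation using $|(-i\omega)^{-\mu}|^2=|\omega|^{-2\mu}$ shows $\parallel u\parallel^2_{J_R^{-\mu}(\mathbb{R})}$ equals the same integral; comparing the three expressions proves both asserted equalities at once (and shows in passing that $J_L^{-\mu}(\mathbb{R})=J_R^{-\mu}(\mathbb{R})$).

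I expect the main obstacle to be purely bookkeeping: one must fix the branches of $(\pm i\omega)^{-\mu}$ consistently so that the phases of the two symbols \emph{add} to $e^{\mp i\mu\pi\operatorname{sgn}\omega}$ --- and hence symmetrize to $2\cos(\mu\pi)$ --- rather than cancel, and one must verify that $|\omega|^{-2\mu}|\hat u(\omega)|^2\in L^1(\mathbb{R})$ so that every displayed integral is finite and Parseval is legitimate; this last condition is precisely the requirement $u\in J_L^{-\mu}(\mathbb{R})\cap J_R^{-\mu}(\mathbb{R})$ that is implicit in the hypotheses of the lemma. Apart from these points --- together with the routine justification of the convolution representations and symbol formulas --- the argument presents no difficulty.
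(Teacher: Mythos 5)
The paper offers no proof of this lemma; it is quoted from the cited references, and your Fourier-multiplier argument (symbols $(\pm i\omega)^{-\mu}$, Parseval, evenness of $|\hat u|$ for real $u$) is precisely the standard proof given in those sources, carried out correctly here. Your closing caveats --- fixing the branches so the phases add to $e^{\mp i\mu\pi\operatorname{sgn}\omega}$, and requiring $|\omega|^{-2\mu}|\hat u(\omega)|^2\in L^1(\mathbb{R})$ (which is genuinely restrictive once $\mu\ge 1/2$ and is the implicit hypothesis behind the loosely stated ``$u$ is a real function'') --- are exactly the right points to flag, so nothing is missing.
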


Generally, we consider the case in which the problem is in a bounded domain instead of $\mathbb{R}$. Thus we
restrict the definitions to $\mathfrak{I}=(a,b)$.
\begin{defn}[\cite{r9,r10}]\label{definition2.9}
 Define the spaces $J_{L,0}^{-\mu}(\mathbb{\mathfrak{I}})$ and $J_{R,0}^{-\mu}(\mathfrak{I})$ as the closures
of  $C_{0}^{\infty}(\mathfrak{I})$ under their respective norms.
\end{defn}

\begin{thm}[\cite{r9,r10}]\label{theorem2.10}
  If  $-\mu_2<-\mu_1<0$, then $J_{L,0}^{-\mu_1}(\mathfrak{I})$ and $J_{R,0}^{-\mu_1}(\mathfrak{I})$ are embedded into $J_{L,0}^{-\mu_2}(\mathfrak{I})$ and $J_{R,0}^{-\mu_2}(\mathfrak{I})$, respectively. Furthermore, $L^2(\mathfrak{I})$ is embedded into both of them.
\end{thm}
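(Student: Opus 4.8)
\noindent The plan is to verify each of the claimed inclusions as a norm inequality on the dense subspace $C_0^\infty(\mathfrak{I})$ — each of the spaces involved being a completion of $C_0^\infty(\mathfrak{I})$, by Definitions~\ref{definition2.7} and~\ref{definition2.9} — and then pass to the completions. Throughout, $\phi\in C_0^\infty(\mathfrak{I})$ is identified with its extension by zero to $\mathbb{R}$, so that ${}_{-\infty}I_x^{\mu}\phi={}_aI_x^{\mu}\phi$. The decisive point is that ${}_{-\infty}I_x^{\mu}$ acts as a Fourier multiplier whose symbol has modulus $|\omega|^{-\mu}$ — the computation underlying Lemma~\ref{lemma2.8} — so that Plancherel's identity gives
\[
\|\phi\|_{J_L^{-\mu}(\mathbb{R})}^2=\frac1{2\pi}\int_{\mathbb{R}}|\omega|^{-2\mu}\,|\widehat\phi(\omega)|^2\,d\omega ,
\]
and the same formula holds for $\|\phi\|_{J_R^{-\mu}(\mathbb{R})}$ since the right-sided symbol $(-i\omega)^{-\mu}$ has the same modulus; the left and right families may therefore be handled in one stroke.

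With this reduction the three inclusions become comparisons of $L^2$-integrals against the weights $|\omega|^{-2\mu}$, and I would split the frequency axis at $|\omega|=1$. On $\{|\omega|\ge 1\}$ one has $|\omega|^{-2\mu_2}\le|\omega|^{-2\mu_1}$ and $|\omega|^{-2\mu}\le 1$, so the high-frequency part of $\|\phi\|_{J_L^{-\mu_2}(\mathbb{R})}^2$ is bounded simultaneously by $\|\phi\|_{J_L^{-\mu_1}(\mathbb{R})}^2$ and by $\|\phi\|_{L^2(\mathfrak{I})}^2$. On $\{|\omega|<1\}$ the weight is singular but, since $2\mu<1$, locally integrable; what is needed is a uniform bound on $|\widehat\phi(\omega)|$ there, and this is supplied by the support constraint on $\phi$: directly $|\widehat\phi(\omega)|\le\|\phi\|_{L^1(\mathfrak{I})}\le|\mathfrak{I}|^{1/2}\|\phi\|_{L^2(\mathfrak{I})}$, which yields $L^2(\mathfrak{I})\hookrightarrow J_{L,0}^{-\mu}(\mathfrak{I})$; and, more sharply, $|\widehat\phi(\omega)|=|\langle\phi,e^{-i\omega x}\mathbf 1_{\mathfrak{I}}\rangle|\le C\,\|\phi\|_{J_L^{-\mu_1}(\mathbb{R})}$ uniformly for $|\omega|<1$, since the truncated exponential lies in the space dual to $J_{L,0}^{-\mu_1}(\mathfrak{I})$ with norm independent of $\omega$, which yields $J_{L,0}^{-\mu_1}(\mathfrak{I})\hookrightarrow J_{L,0}^{-\mu_2}(\mathfrak{I})$. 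Adding the two frequency regimes and invoking density of $C_0^\infty(\mathfrak{I})$ finishes the proof.

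The only genuine difficulty is the low-frequency estimate: the weight $|\omega|^{-2\mu}$ blows up at the origin, and for an arbitrary $L^2(\mathbb{R})$ function one cannot trade a lighter such weight for a heavier one. It is precisely the confinement of $\phi$ to a fixed bounded interval — a Paley-Wiener effect rendering $\widehat\phi$ entire and uniformly controlled near $\omega=0$ — that saves the estimate, the local integrability of $|\omega|^{-2\mu}$ being the accompanying constraint ($2\mu<1$). An alternative, Fourier-free route combines the semigroup identity ${}_aI_x^{\mu_2}={}_aI_x^{\mu_2-\mu_1}\,{}_aI_x^{\mu_1}$ with the boundedness of truncated Riemann-Liouville integration on $L^2(\mathfrak{I})$; there the same obstruction resurfaces as the behaviour of ${}_aI_x^{\mu}\phi$ at $+\infty$.
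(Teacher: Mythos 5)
The paper itself offers no proof of Theorem~\ref{theorem2.10}: it is imported verbatim from \cite{r9,r10}, where the argument is precisely the ``Fourier-free route'' you mention in your last sentence --- the semigroup identity $I^{\mu_2}=I^{\mu_2-\mu_1}I^{\mu_1}$ combined with the $L^2$-boundedness of the fractional integral on a bounded interval. Your Plancherel-based argument is therefore a genuinely different (and in some ways more transparent) route: the high-frequency comparison of the weights $|\omega|^{-2\mu}$ is immediate, and it isolates cleanly where compact support of $\phi$ enters, namely in controlling $\widehat{\phi}$ near $\omega=0$. What the semigroup route buys is brevity; what yours buys is an honest accounting of the tail/low-frequency issue that the semigroup route tends to sweep under the rug (as you correctly note, $_aI_x^{\nu}$ is bounded on $L^2(a,b)$ but $_{-\infty}I_x^{\nu}$ is not bounded on $L^2(\mathbb{R})$, and the norm in Definition~\ref{definition2.7} is taken over all of $\mathbb{R}$).

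Two points should be firmed up. First, your duality claim --- that $e^{-i\omega x}\mathbf{1}_{\mathfrak{I}}$ lies in the dual of $J_{L,0}^{-\mu_1}(\mathfrak{I})$ uniformly for $|\omega|\le 1$ --- is the whole content of the embedding $J_{L,0}^{-\mu_1}\hookrightarrow J_{L,0}^{-\mu_2}$ and cannot be left as an assertion. It does hold, and the dual element can be exhibited explicitly: since
\[
\frac{1}{\Gamma(\mu_1)}\int_x^b(\xi-x)^{\mu_1-1}(b-\xi)^{-\mu_1}\,d\xi=\Gamma(1-\mu_1)\qquad(x<b),
\]
one has $\mathbf{1}_{\mathfrak{I}}=\Gamma(1-\mu_1)^{-1}\,{}_xI_{\infty}^{\mu_1}\bigl[(b-\cdot)^{-\mu_1}\mathbf{1}_{\mathfrak{I}}\bigr]$ on $\mathfrak{I}$, so the adjoint property (Lemma~\ref{lemma2.3}) gives $|\widehat{\phi}(\omega)|\le C\,\|{}_{-\infty}I_x^{\mu_1}\phi\|_{L^2(\mathbb{R})}\,\|(b-\cdot)^{-\mu_1}\|_{L^2(\mathfrak{I})}$, with the $e^{-i\omega x}$ factor absorbed harmlessly for $|\omega|\le1$; note $(b-\cdot)^{-\mu_1}\in L^2(\mathfrak{I})$ exactly when $\mu_1<1/2$. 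Second, and relatedly, your constraint $2\mu<1$ is not merely ``accompanying'': for $\mu\ge 1/2$ the weight $|\omega|^{-2\mu}$ is not locally integrable and indeed $\|{}_{-\infty}I_x^{\mu}\phi\|_{L^2(\mathbb{R})}=\infty$ for any $\phi\in C_0^{\infty}(\mathfrak{I})$ with $\int\phi\ne 0$, so the theorem as literally stated (arbitrary $0<\mu_1<\mu_2$) is vacuous outside the range $\mu_2<1/2$. You should state this restriction up front; it is satisfied in the paper's application, where $\mu=\alpha_i/2=(2-\alpha)/2\in(0,1/2)$.
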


\begin{defn}[\cite{r9,r10}]\label{definition2.11}
By Lemma \ref{lemma2.3}, Lemma \ref{lemma2.8}, Definition \ref{definition2.7} and Definition \ref{definition2.9}, we obtain
\begin{align}
\int_c^d(_{a}\!I_{x}^{\alpha_1}u(\cdot,y),u(\cdot,y))_{L^2(a,b)}dy&=cos(\alpha_1\pi/2)
\int_c^d \parallel u(\cdot,y)\parallel^{_2}_{J_{R,0}^{-\alpha_1/2}(a,b)}dy,
\\
\int_a^b(_{c}\!I_{y}^{\alpha_2}u(x,\cdot),u(x,\cdot))_{L^2(c,d)}dx&=cos(\alpha_2\pi/2)\int_a^b \parallel u(x,\cdot)\parallel^{_2}_{J_{R,0}^{-\alpha_2/2}(c,d)}dx.
\end{align}
Let the spaces $J_{R,0}^{-\alpha_1/2}(a,b)$ and $J_{R,0}^{-\alpha_2/2}(c,d)$ denote
the closures of $C^{\infty}_{0}(a,b)$ and $C^{\infty}_{0}(c,d)$ under their respective norms, and $\alpha_1=2-\alpha,\alpha_2=2-\beta$, $\alpha_1,\alpha_2\in(0,1)$.
\end{defn}

\section{Derivation of the numerical schemes}
We first review some notations, and then focus on deriving the fully discrete numerical scheme of the 2D
space-fractional convection-diffusion equation.

\subsection{Notations}
For the mathematical setting of the DG methods, we
describe some spaces and notations. The domain $\Omega$ is subdivided into elements $E$. Here $E$ is a triangle in 2D. We assume that the intersection of two elements is either empty, or an edge (2D). The mesh is called regular if

$$\forall E\in\mathscr{E}_h,~~~\frac{h_E}{\rho_E}\leq C,$$
where $\mathscr{E}_h$ is the subdivision of $\Omega$, $C$ a constant, $h_E$ the diameter of the element $E$, and $\rho_E$ the diameter of the inscribed circle in element $E$. Throughout this work $h=\max_{E\in\mathscr{E}_h}h_E$.

We introduce the broken Sobolev space for any real numbers $s$ by
$$H^s(\mathscr{E}_h)=\left\{ v\in L^2(\Omega):\,\forall E\in\mathscr{E}_h,v|_E\in H^s(E)\right\},$$
equipped with the broken Sobolev norm:
$$\parallel v\parallel_{H^s(\mathscr{E}_h)}=\big(\sum_{E\in\mathscr{E}_h}\parallel v\parallel^2_{H^s(E)}\big)^{\frac{1}{2}}.$$

The set of edges of the subdivision $\mathscr{E}_h$ is denoted by $\mathscr{E}_h^B$. Let  $\mathscr{E}_h^i$ denote the set of interior edges, and $\mathscr{E}_h^b=\mathscr{E}_h^B\ \backslash  \mathscr{E}_h^i$ denote the set of edges on $\partial\Omega$. With each edge $e$, the unit normal vector is $\bm n_e$. If $e$ is on the boundary $\partial\Omega$, then $\bm n_e$ is taken to be the unit outward vector normal to $\partial\Omega$ \cite{r17}.

If $v$ belongs to $H^1(\mathscr{E}_h)$, then the trace of $v$ along any side of one element $E$ is well defined. If two elements $E_1^e$ and $E_2^e$ are neighbours and share one common side $e$, then there are two traces of $v$ belonging to $e$. We assume that the normal vector $\bm n_e$ is oriented from $E_1^e$ to $E_2^e$. Then the average and jump are defined, respectively, by
$$\{v\}=\frac{1}{2}(v|_{\partial E_1^e}+v|_{\partial E_2^e}),\ \ \llbracket v\rrbracket=(v|_{\partial E_1^e}-v|_{\partial E_2^e}), \ \forall e\in\partial E_1^e\bigcap \partial E_2^e.$$
If $e$ is on $\partial\Omega$, we have
$$\{v\}=\llbracket v\rrbracket=v|_{e},~~\forall e\in\partial E\bigcap\partial\Omega.$$

\subsection{HDG scheme}
For designing the DG method of fractional derivative, we rewrite (\ref{eq1}) as a low order system (see \cite{r9,r10}).
Firstly, we introduce two auxiliary variables $\bm p,\bm\sigma$ and set
\begin{align*}
\centering
\begin{cases}
\bm p=\nabla u,
\\
\bm\sigma=I_{\bm x}^{\bar{\bm\alpha}}\bm p=(_{a}\!I_x^{\alpha_1}p_x,
_{c}\!I_{y}^{\alpha_2}p_y).
 \end{cases}
 \end{align*}
As Ref. \cite{r2}, let $\psi(\bm x,t)=(1+|\bm b(\bm x,t)|^2)^{\frac{1}{2}}$, where $|\bm b(\bm x,t)|^{2}=b_1^2+b_2^2$. Hence, the characteristic direction associated with $\partial_t u+\bm b\cdot\nabla u$ is denoted by $\partial_\tau=\frac{\partial_t}{\psi}+\frac{\bm b\cdot\nabla}{\psi}$. Then, from (\ref{equation2.4}) and (\ref{equation2.5}), Eq. (\ref{eq1}) can be rewritten as a mixed form \cite{r4,r5,r6,r9,r10}:

\begin{align}\label{eqloworder}
\centering
\begin{cases}
\psi\partial_\tau u-\nabla\cdot\bm\sigma=f, & (x,y,t)\in \Omega\times J,\cr
\bm\sigma-I_{\bm x}^{\bar{\bm\alpha}}\bm p=0, & (x,y,t)\in \Omega\times J,\cr
\bm p-\nabla u=0, & ( x,y,t)\in\Omega\times J,\cr
u(x,y,0)=u_{0}(x,y), & (x,y)\in\Omega,\cr
u(x,y,t)=0, & (x,y,t)\in\partial\Omega\times J,
  \end{cases}
  \end{align}
where $I_{\bm x}^{ \bar{\bm\alpha}}$ is defined in (\ref{eq2.6}).

For an arbitrary subset $E\in\mathscr{E}_h$, we multiply the first, second, and the third equation of (\ref{eqloworder}) by the smooth test functions $v,\bm\omega$, and $\bm q$, respectively. In order to obtain a symmetric weak variational formulation, we only integrate the first equation of (\ref{eqloworder}) by parts, and obtain
\begin{align}
\centering
 \begin{cases}
  \int_E\psi\partial_\tau u vd\bm x
 +\int_E\bm\sigma\cdot\nabla vd\bm x-\int_{\partial E}\bm\sigma\cdot\bm n_Evds=\int_E fvd\bm x,\cr
 \cr
 \int_E\bm\sigma\cdot\bm\omega d\bm x-\int_EI_{\bm x}^{\bar{\bm\alpha}}\bm p\cdot\bm\omega d\bm x=0,\cr
 \cr
 \int_{E}\bm p\cdot\bm q d\bm x-\int_E\nabla u\cdot\bm qd\bm x=0,
 \end{cases}
  \end{align}
where $\bm n_E$ is the outward unit normal to $\partial E$. Note that the above equations are well defined for the functions $(u,\bm\sigma,\bm p)$ and $(v,\bm\omega,\bm q)$ in $\mathbb{V}\times\mathbb{Q}\times\mathbb{Q}$, where
\begin{align*}
\mathbb{V}=&\Big\{u\in L^2(\Omega): u|_{E}\in H^1(E), \ \forall E\in\mathscr{E}_h\Big\},
  \\
\mathbb{Q}=&\Big\{\bm p\in (L^2(\Omega))^2: \bm p|_{E}\in (H^1(E))^2, \ \forall E\in\mathscr{E}_h\Big\}.
\end{align*}

Next we will approximate the exact solution $(u,\bm\sigma,\bm p)$ with the functions $(u_h,\bm\sigma_h,\bm p_h)$ in the finite element spaces $\mathbb{V}_h\times\mathbb{Q}_h\times\mathbb{Q}_h\subset \mathbb{V}\times\mathbb{Q}\times\mathbb{Q}$, where

\begin{align*}
\mathbb{V}_h=&\Big\{u_h\in L^2(\Omega): u_h|_{E}\in P^k(E),\ \ \forall E\in\mathscr{E}_h\Big\},
  \\
\mathbb{Q}_h=&\Big\{\bm p_h\in (L^2(\Omega))^2: \bm p_h|_{E}\in (P^k(E))^2,\ \ \forall E\in\mathscr{E}_h\Big\},
\end{align*}
where the finite element space $P^k(E)$ denotes the set of polynomials of degree less than or equal to $k\geq 0$.

Thus, the approximate solution $(u_h,\bm\sigma_h,\bm p_h)$ satisfies the weak formulation, for all $(v,\bm\omega,\bm q)\in \mathbb{V}_h\times\mathbb{Q}_h\times\mathbb{Q}_h$,
  \begin{align}\label{eq3.3}
\centering
 \begin{cases}
  \int_E\psi\partial_\tau u_hvd\bm x
 +\int_E\bm\sigma_h\cdot\nabla vd\bm x
 -\int_{\partial E}\bm\sigma_h^\star\cdot\bm n_Evds=\int_E fvd\bm x,\cr
 \cr
 \int_E\bm\sigma_h\cdot\bm\omega d\bm x-\int_EI_{\bm x}^{\bar{\bm\alpha}}\bm p_{h}\cdot\bm\omega d\bm x=0,\cr
 \cr
 \int_{E}\bm p_h\cdot\bm q d\bm x-\int_{E}\nabla u_h\cdot\bm qd\bm x=0,
 \end{cases}
  \end{align}
where the numerical fluxes are well chosen as
$\bm\sigma_h^\star=\{\bm\sigma_h\},\ \forall e\in\mathscr{E}_h^B$ in order to ensure the stability of the scheme and its accuracy.

It is well known that the fluxes $\bm\sigma_h^\star=\{\bm\sigma_h\}$ are consistent. Inspired by the penalty Galerkin methods
\cite{r17} and noting the fact that $\llbracket u\rrbracket\big|_{e}=0,\forall e\in\mathscr{E}_h^B$
and $\llbracket\bm\sigma\rrbracket=0,\forall e\in\mathscr{E}_h^i$, a symmetric and stable DG scheme is derived as follows.
Substituting the flux $\bm\sigma_h^\star=\{\bm\sigma_h\}$ into (\ref{eq3.3}), summing over all the
elements, and adding the penalty terms, we observe that for $(u_h,\bm\sigma_h,\bm p_h)\in \mathbb{V}_h\times\mathbb{Q}_h\times\mathbb{Q}_h$, the semi-discrete variational formulation is given by
  \begin{align}\label{eq3.4}
\begin{cases}
(\psi\partial_\tau u_h,v)+(\bm\sigma_h,\nabla v)
-(\{\bm\sigma_h\}\cdot\bm n_e,\llbracket v\rrbracket)_{\mathscr{E}_h^B}
+\epsilon_{1}(\llbracket u_h\rrbracket,\llbracket v\rrbracket)_{\mathscr{E}_h^B}=(f,v),\cr
 \\
(\bm\sigma_h,\bm\tau)-(I_{\bm x}^{\bar{\bm\alpha}}\bm p_{h},\bm\omega)=0,
 \cr
 \\
(\bm p_h,\bm q)-(\nabla u_h,\bm q)+(\llbracket u_h\rrbracket,\{\bm q\}\cdot\bm n_e)_{\mathscr{E}_h^B}+\epsilon_{2}(\llbracket\bm\sigma_h\rrbracket,\llbracket \bm q\rrbracket)_{\mathscr{E}_h^i}
 =0.
 \end{cases}
\end{align}

For any $(v,\bm\omega,\bm q)\in\mathbb{V}_h\times\mathbb{Q}_h\times\mathbb{Q}_h$, the exact solution of (\ref{eq1}) is expected to be continuously differentiable with respect to the variables $x$ and $y$, which keeps the consistency of the scheme. The term $(\llbracket u\rrbracket,\{\bm q\}\cdot\bm n_e)_{\mathscr{E}_h^B}$ vanishes since the exact solution $u$ satisfies $\llbracket u\rrbracket\big|_{e}=0,\forall e\in\mathscr{E}_h^B$. Note that $\epsilon_1(\llbracket u\rrbracket,\llbracket v\rrbracket)_{\mathscr{E}_h^B}$ penalizes the jump of the function $u$, whereas $\epsilon_2(\llbracket\bm\sigma\rrbracket,\llbracket \bm q\rrbracket)_{\mathscr{E}_h^i}$ penalizes the jump of the function $\bm\sigma$. Here $\epsilon_1$ and $\epsilon_2$ are the positive constants to be chosen. Unfortunately the third equation of (\ref{eq3.4}) makes the DG method lose its locality, since $\bm p_h$ is a function of $u_h$ and $\bm\sigma_h$, $\bm p_h$ can not be eliminated from the third equation. So we have to simultaneously solve the three unknowns $u_h,p_{xh},p_{yh}$. Although the extra unknowns can not be eliminated in the HDG methods, our choice of fluxes makes the error analysis available.
Above and throughout, the following notations are used,
\begin{align*}
(w,v)=\sum_{E\in\mathscr{E}_h}(w,v)_E,~(w,v)_{\mathscr{E}_h^i}=\sum_{e\in\mathscr{E}_h^i}(w,v)_e,~ (w,v)_{\mathscr{E}_h^B}=\sum_{e\in\mathscr{E}_h^B}(w,v)_e.
\end{align*}

\subsection{Dealing with time}
After performing the HDG approximation, we discretize the time derivative with the characteristic method.
For the given positive integer $N$, let $0=t^0<t^1<\cdots<t^N=T$ be a partition of $J$ into subintervals $J^n=(t^{n-1},t^n]$ with uniform mesh and the interval length $\Delta t=t^n-t^{n-1}, 1\leq n\leq N$. The characteristic tracing back along the field $\bm b$ of a point $\bm x=(x,y)\in\Omega$ at time $t^n$ to $t^{n-1}$ is approximated by
\cite{r2,r3,r1}
$$\check{\bm x}(\bm x,t^{n-1})=\bm x-\bm b(\bm x,t^n)\Delta t. $$

Therefore, the approximation for the hyperbolic part of (\ref{eq1}) at time $t^n$ can be approximated as
$$\psi^n\partial_\tau u^n\approx\frac{u^n-\check{u}^{n-1}}{\Delta t},$$
where $u^n=u(\bm x,t^n)$, $\check{u}^{n-1}=u(\check{\bm x}(\bm x,t^{n-1}),t^{n-1})$, and $\check{u}^0=u^0(\bm x)$.

\begin{rem} [see \cite{r1}]\label{Remark 2}
Assume that the solution $u$ of (\ref{eq1}) is sufficiently regular. Under the assumption of the function $\bm b$, we have
\begin{align*}
\left\|\psi^n\partial_\tau u^n-\frac{u^n-\check{u}^{n-1}}{\Delta t}\right\|^2_{L^2(\Omega)}\leq C\parallel\psi^{(4)}\parallel_{L^\infty(J;L^\infty(\Omega))}\parallel\partial_{\tau\tau}u\parallel^2_{L^2({J}^n;L^2(\Omega))}\Delta t.
\end{align*}
\end{rem}
Thus, the fully discrete scheme corresponding to the variational formulation (\ref{eq3.4}) is
to find $(u_h^n,\bm\sigma_h^n,\bm p_h^n)\in \mathbb{V}_h\times\mathbb{Q}_h\times\mathbb{Q}_h$, for any
$(v,\bm\tau,\bm q)\in \mathbb{V}_h\times\mathbb{Q}_h\times\mathbb{Q}_h$, such that

\begin{align}\label{eq3.5}
\begin{cases}
\big(\frac{u_h^n-\check{u}^{n-1}_h}{\Delta t},v\big)
+(\bm\sigma_h^n,\nabla v)-(\{\bm\sigma^n_h\}\cdot\bm n_e,\llbracket v\rrbracket)_{\mathscr{E}_h^B}
+\epsilon_{1}(\llbracket u_h^n\rrbracket,\llbracket v\rrbracket)_{\mathscr{E}_h^B}=(f^n,v),\cr
 \\
 (\bm\sigma_h^n,\bm\omega)-(I_{\bm x}^{\bar{\bm\alpha}}\bm p_{h}^n,\bm\omega)=0,
 \cr
 \\
 (\bm p^n_h,\bm q)-(\nabla u_h^n,\bm q)+(\llbracket u^n_h\rrbracket,\{\bm q\}\cdot\bm n_e)_{\mathscr{E}_h^B}+\epsilon_{2}(\llbracket\bm\sigma^n_h\rrbracket,\llbracket \bm q\rrbracket)_{\mathscr{E}_h^i}
 =0,
 \end{cases}
\end{align}
where $\check{u}_h^{n-1}=u_h(\check{\bm x}(\bm x,t^{n-1}),t^{n-1}),\check{u}_h^0=u^0$.

Define the bilinear forms by
\begin{align*}
\mathbbm{a}(\bm\sigma_h^n,v)&:=(\bm\sigma_h^n,\nabla v)-(\{\bm\sigma^n_h\}\cdot\bm n_e,\llbracket v\rrbracket)_{\mathscr{E}_h^B},\ \
\mathbbm{c}(\bm p_h^n,\bm q):=(\bm p_h^n,\bm q),
\\
\mathbbm{d}(u_h^n,v)&:=\epsilon_1(\llbracket u_h^n\rrbracket,\llbracket v\rrbracket)_{\mathscr{E}_h^B},\ \ \
\mathbbm{e}(\bm\sigma_h^n,\bm q):=\epsilon_2\big(\llbracket\bm\sigma_h^n\rrbracket,
\llbracket
\bm q\rrbracket)_{\mathscr{E}_h^i},
\end{align*}
and the linear form
\begin{align*}
\mathcal {F}(v)&:=(f^n,v)~~~\forall v\in \mathbb{V}_h.
\end{align*}
We can rewrite (\ref{eq3.5}) as a compact formulation: Find $(u_h^n,\bm\sigma_h^n,\bm p_h^n)\in \mathbb{V}_h\times\mathbb{Q}_h\times\mathbb{Q}_h$ at time $t=t^n$, such that
\begin{align}\label{eq3.6}
\begin{cases}
\big(\frac{u_h^n-\check{u}^{n-1}_h}{\Delta t},v\big)+\mathbbm{a}(\bm\sigma_h^n,v)+\mathbbm{d}(u_h^n,v)=\mathcal {F}(v), & \forall v\in \mathbb{V}_h,
\cr
\\
\mathbbm{c}(\bm\sigma_h^n,\bm\omega)-\mathbbm{c}(I_{\bm x}^{\bar{\bm\alpha}}\bm p_{h}^n,\bm\omega)=0, & \forall\bm\omega\in \mathbb{Q}_h,
\cr
\\
\mathbbm{c}(\bm p_h^n,\bm q)-\mathbbm{a}(\bm q,u_h^n)+\mathbbm{e}(\bm\sigma_h^n,\bm q)=0, & \forall \bm q\in\mathbb{Q}_h.
\end{cases}
\end{align}

%
%

\section{Stability analysis and error estimate}
This section focuses on providing the proof of the unconditional stability and the error estimates of the schemes.
\subsection{Stability analysis}

In the following, $C$ indicates a generic constant independent of $h$ and $\Delta t$, which takes different values in different occurrences.
\begin{lem} [\cite{r2}]\label{lemma4.1}
  If $\bm b\in L^\infty(J;W^{1,\infty}(\Omega)^2)$, for any function $v\in L^2(\Omega)$ and each $n$, there is
\begin{align}
\parallel\check{v}\parallel^2_{L^2(\Omega)}-\parallel v\parallel^2_{L^2(\Omega)}\leq C\Delta t\parallel v\parallel^2_{L^2(\Omega)},
\end{align}
where $\check{v}(\bm x)=v(\bm x-\bm b(\bm x,t^n)\Delta t)$.
\end{lem}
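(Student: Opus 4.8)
The plan is to prove the estimate $\|\check v\|_{L^2(\Omega)}^2 - \|v\|_{L^2(\Omega)}^2 \le C\Delta t\,\|v\|_{L^2(\Omega)}^2$ by a change of variables in the integral defining $\|\check v\|_{L^2(\Omega)}^2$. Writing $\Phi_n(\bm x) = \bm x - \bm b(\bm x,t^n)\Delta t$, so that $\check v(\bm x) = v(\Phi_n(\bm x))$, I would first substitute $\bm y = \Phi_n(\bm x)$ in
\begin{align*}
\|\check v\|_{L^2(\Omega)}^2 = \int_\Omega |v(\Phi_n(\bm x))|^2\,d\bm x = \int_{\Phi_n(\Omega)} |v(\bm y)|^2\,|\det D\Phi_n^{-1}(\bm y)|\,d\bm y.
\end{align*}
The Jacobian matrix is $D\Phi_n(\bm x) = \mathbb{I} - \Delta t\,D\bm b(\bm x,t^n)$, and since $\bm b \in L^\infty(J;W^{1,\infty}(\Omega)^2)$ its entries are bounded by $\|\bm b\|_{L^\infty(J;W^{1,\infty})}$; hence for $\Delta t$ small (say $\Delta t\,\|\bm b\|_{L^\infty(J;W^{1,\infty})} \le \tfrac12$) the map $\Phi_n$ is a bi-Lipschitz homeomorphism onto its image, $\det D\Phi_n(\bm x) = 1 - \Delta t\,\mathrm{tr}\,D\bm b(\bm x,t^n) + O(\Delta t^2) \ge \tfrac12 > 0$, and the standard expansion of the determinant gives $|\det D\Phi_n^{-1}(\bm y)| = 1/|\det D\Phi_n(\bm x)| \le 1 + C\Delta t$ for a constant $C$ depending only on $\|\bm b\|_{L^\infty(J;W^{1,\infty}(\Omega)^2)}$.

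Next I would deal with the mismatch between $\Phi_n(\Omega)$ and $\Omega$. Because $v$ is (implicitly) extended by zero outside $\Omega$ — which is consistent with the zero boundary condition in (\ref{eq1}) and the way $\check u$ is used in the scheme — the integral over $\Phi_n(\Omega)$ can be replaced by an integral over all of $\mathbb{R}^2$, or one simply notes $\int_{\Phi_n(\Omega)} |v(\bm y)|^2\,|\det D\Phi_n^{-1}|\,d\bm y \le (1+C\Delta t)\int_{\mathbb{R}^2}|v(\bm y)|^2\,d\bm y = (1+C\Delta t)\|v\|_{L^2(\Omega)}^2$. Subtracting $\|v\|_{L^2(\Omega)}^2$ from both sides yields exactly $\|\check v\|_{L^2(\Omega)}^2 - \|v\|_{L^2(\Omega)}^2 \le C\Delta t\,\|v\|_{L^2(\Omega)}^2$, as claimed. (If one does not wish to invoke the zero extension, the boundary layer $\Omega \setminus \Phi_n(\Omega)$ has measure $O(\Delta t)$, but controlling $\int$ of $|v|^2$ over it is not possible for general $L^2$ functions without the extension, so the zero-extension convention is the clean route and is the one implicitly in force throughout the paper.)

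The step I expect to be the main technical obstacle — or at least the point requiring the most care — is making the change-of-variables rigorous for merely Lipschitz $\bm b$ (rather than $C^1$): one needs that $\Phi_n$ is injective with a Lipschitz inverse and that the area formula applies. This is handled by the smallness condition on $\Delta t$: if $\Delta t\,\|D\bm b\|_{L^\infty} < 1$ then $\Phi_n$ is a Lipschitz perturbation of the identity, hence a bi-Lipschitz bijection onto $\Phi_n(\Omega)$ by the Banach fixed point / inverse function argument for Lipschitz maps, and the change-of-variables formula for Lipschitz bijections (valid a.e. by Rademacher's theorem) then applies. A secondary, purely bookkeeping point is tracking that the constant $C$ depends only on $\|\bm b\|_{L^\infty(J;W^{1,\infty}(\Omega)^2)}$ and is uniform in $n$, which follows because the bound on $D\bm b(\cdot,t^n)$ is uniform in $n$ by the $L^\infty$-in-time assumption. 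All remaining manipulations — the determinant expansion and the final subtraction — are elementary.
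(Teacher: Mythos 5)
The paper gives no proof of this lemma --- it is quoted directly from \cite{r2} --- and your change-of-variables argument is exactly the standard proof found in that reference and in the earlier characteristics literature, so the approaches coincide. The argument is correct, and the two caveats you identify (the implicit smallness condition $\Delta t\,\|D\bm b\|_{L^\infty}<1$ making $\bm x\mapsto\bm x-\bm b(\bm x,t^n)\Delta t$ a bi-Lipschitz bijection with Jacobian bounded below, and the zero extension of $v$ outside $\Omega$ to absorb the mismatch between the image of $\Omega$ under this map and $\Omega$ itself) are precisely the conventions implicitly in force in the paper, which already assumes $C\Delta t<1$ in the Gr\"onwall step of Theorem~\ref{theorem4.2} and homogeneous Dirichlet data.
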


\begin{thm}[Numerical stability]\label{theorem4.2}
If $\bm b\in L^\infty(J;W^{1,\infty}(\Omega)^2)$, the HDG scheme (\ref{eq3.5}) is stable, i.e., for any integer
$N=1,2,\cdots$, there is
\begin{align}
\begin{split}
\parallel u_h^N\parallel_{L^2(\Omega)}^2+2\Delta t\sum_{n=1}^N\big|(u_h^n,\bm\sigma_h^n,\bm p_h^n)\big|^{2}_{\mathcal{A}}\leq C\Delta t\sum_{n=1}^N\parallel f^n\parallel_{L^2(\Omega)}^{2}+C\parallel u^{0}\parallel_{L^2(\Omega)}^2,
\end{split}
\end{align}
where $u_h^{0}=u^{0}$, and the semi-norm $|\cdot|_{\mathcal{A}}$ is defined as
\begin{align}\label{eq3.7}
\begin{split}
&\big|(u_h^n,\bm\sigma_{h}^n,\bm p_{h}^n)\big|^2_{\mathcal {A}}\\
&=\mathbbm{d}(u_h^n,u_h^n)+\mathbbm{c}(I_{\bm x}^{\bar{\bm\alpha}}\bm p_{h}^n,\bm p_h^n)+\mathbbm{e}(\bm\sigma_h^n,\bm\sigma_h^n)
\\
&=\cos(\alpha_1\pi/2)\int_c^d \parallel p_{xh}^n(\cdot,y)\parallel^2_{J_{R,0}^{-\alpha_1/2}(a,b)}dy
+\epsilon_{1}\sum_{e\in\mathscr{E}_h^B}
\parallel\llbracket u_h^n\rrbracket\parallel_{L^2(e)}^2
\\&~~~~+\cos(\alpha_2\pi/2)\int_a^b \parallel p_{yh}^n(x,\cdot)\parallel^2_{J_{R,0}^{-\alpha_2/2}(c,d)}dx
+\epsilon_{2}\sum_{e\in\mathscr{E}_h^i}\parallel
\llbracket\bm\sigma_h^n\rrbracket\parallel_{L^2(e)}^2.
\end{split}
\end{align}
\end{thm}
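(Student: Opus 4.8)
The plan is to use the standard energy method for HDG schemes: test the three equations of \eqref{eq3.5} with the discrete solution itself and sum the resulting identities so that the coupling terms cancel. Concretely, I would take $v = u_h^n$ in the first equation, $\bm\omega = \bm p_h^n$ in the second equation, and $\bm q = \bm\sigma_h^n$ in the third equation. Adding the first and third equations, the terms $\mathbbm{a}(\bm\sigma_h^n, u_h^n)$ cancel (this is precisely the point of the symmetric formulation and the choice of flux $\bm\sigma_h^\star = \{\bm\sigma_h\}$, together with the penalty term $(\llbracket u_h^n\rrbracket, \{\bm q\}\cdot\bm n_e)_{\mathscr{E}_h^B}$, which is why the scheme was designed this way). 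Using the second equation to replace $\mathbbm{c}(\bm\sigma_h^n,\bm p_h^n)$ by $\mathbbm{c}(I_{\bm x}^{\bar{\bm\alpha}}\bm p_h^n,\bm p_h^n)$, what survives on the left is exactly
\begin{align*}
\Big(\frac{u_h^n-\check{u}_h^{n-1}}{\Delta t},u_h^n\Big) + \mathbbm{d}(u_h^n,u_h^n) + \mathbbm{c}(I_{\bm x}^{\bar{\bm\alpha}}\bm p_h^n,\bm p_h^n) + \mathbbm{e}(\bm\sigma_h^n,\bm\sigma_h^n) = (f^n,u_h^n),
\end{align*}
i.e.\ $\big(\frac{u_h^n-\check{u}_h^{n-1}}{\Delta t},u_h^n\big) + \big|(u_h^n,\bm\sigma_h^n,\bm p_h^n)\big|^2_{\mathcal{A}} = (f^n,u_h^n)$, where the identification of the seminorm with the expression in \eqref{eq3.7} comes from Definition~\ref{definition2.11} applied to $\mathbbm{c}(I_{\bm x}^{\bar{\bm\alpha}}\bm p_h^n,\bm p_h^n)$ and from the definitions of $\mathbbm{d}$ and $\mathbbm{e}$.

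Next I would handle the time term. Using the elementary identity $2(a-b,a) = \|a\|^2 - \|b\|^2 + \|a-b\|^2 \geq \|a\|^2-\|b\|^2$, I get
\begin{align*}
2\Big(\frac{u_h^n-\check{u}_h^{n-1}}{\Delta t},u_h^n\Big) \geq \frac{1}{\Delta t}\big(\|u_h^n\|^2_{L^2(\Omega)} - \|\check{u}_h^{n-1}\|^2_{L^2(\Omega)}\big).
\end{align*}
Then Lemma~\ref{lemma4.1} (applied to $v = u_h^{n-1}$, whose characteristic pull-back is $\check{u}_h^{n-1}$) bounds $\|\check{u}_h^{n-1}\|^2_{L^2(\Omega)} \leq (1+C\Delta t)\|u_h^{n-1}\|^2_{L^2(\Omega)}$. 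For the right-hand side, Cauchy--Schwarz and Young's inequality give $2(f^n,u_h^n) \leq \|f^n\|^2_{L^2(\Omega)} + \|u_h^n\|^2_{L^2(\Omega)}$. Multiplying the whole identity by $2\Delta t$ and rearranging yields
\begin{align*}
\|u_h^n\|^2_{L^2(\Omega)} + 2\Delta t\,\big|(u_h^n,\bm\sigma_h^n,\bm p_h^n)\big|^2_{\mathcal{A}} \leq (1+C\Delta t)\|u_h^{n-1}\|^2_{L^2(\Omega)} + \Delta t\|f^n\|^2_{L^2(\Omega)} + \Delta t\|u_h^n\|^2_{L^2(\Omega)},
\end{align*}
and absorbing the $\Delta t\|u_h^n\|^2$ term on the left (valid for $\Delta t$ small, or by adjusting constants) gives a recursion of the form $\|u_h^n\|^2 + 2\Delta t|(\cdots)|^2_{\mathcal{A}} \leq (1+C\Delta t)\|u_h^{n-1}\|^2 + \Delta t\|f^n\|^2$.

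Finally I would sum the recursion over $n = 1,\dots,N$ and apply the discrete Gronwall inequality. Since $\prod(1+C\Delta t) \leq e^{CT}$, this absorbs the accumulated seminorm terms into a single constant and produces
\begin{align*}
\|u_h^N\|^2_{L^2(\Omega)} + 2\Delta t\sum_{n=1}^N\big|(u_h^n,\bm\sigma_h^n,\bm p_h^n)\big|^2_{\mathcal{A}} \leq C\Delta t\sum_{n=1}^N\|f^n\|^2_{L^2(\Omega)} + C\|u^0\|^2_{L^2(\Omega)},
\end{align*}
which is the claimed estimate, using $u_h^0 = u^0$. The main obstacle I anticipate is not any single hard estimate but rather the careful bookkeeping in the cancellation step: one must verify that with the chosen flux $\bm\sigma_h^\star = \{\bm\sigma_h\}$ on all edges and the boundary penalty pairing $(\llbracket u_h^n\rrbracket,\{\bm q\}\cdot\bm n_e)_{\mathscr{E}_h^B}$, the edge contributions from $\mathbbm{a}(\bm\sigma_h^n,u_h^n)$ in the first equation and $\mathbbm{a}(\bm\sigma_h^n,u_h^n)$ (appearing as $-\mathbbm{a}(\bm q,u_h^n)$ with $\bm q = \bm\sigma_h^n$) in the third equation exactly cancel on both interior and boundary edges — exploiting $\llbracket\bm\sigma_h\cdot\bm n_e\rrbracket$ versus $\{\bm\sigma_h\}$ identities and the fact that on $\partial\Omega$ one has $\{v\} = \llbracket v\rrbracket = v$. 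A secondary point requiring a word of care is confirming that the sign of $\mathbbm{c}(I_{\bm x}^{\bar{\bm\alpha}}\bm p_h^n,\bm p_h^n)$ is nonnegative, which is exactly the content of Definition~\ref{definition2.11} (the cosine factors $\cos(\alpha_i\pi/2)$ are positive since $\alpha_i \in (0,1)$), so the seminorm $|\cdot|_{\mathcal{A}}$ is genuinely nonnegative and the estimate is meaningful.
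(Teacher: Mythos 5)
Your proposal is correct and follows essentially the same route as the paper: the paper tests with $v=2\Delta t\,u_h^n$, $\bm\omega=-2\Delta t\,\bm p_h^n$, $\bm q=2\Delta t\,\bm\sigma_h^n$ and adds the three equations so that $\mathbbm{a}(\bm\sigma_h^n,u_h^n)$ and $\mathbbm{c}(\bm\sigma_h^n,\bm p_h^n)$ cancel by symmetry, then uses $2(a-b,a)\ge\|a\|^2-\|b\|^2$, Lemma~\ref{lemma4.1}, Young's inequality, and the discrete Gr\"onwall inequality exactly as you do. Your sign choice $\bm\omega=\bm p_h^n$ combined with substitution is algebraically equivalent to the paper's $\bm\omega=-\bm p_h^n$ with direct addition, so there is no substantive difference.
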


\begin{proof}
Let $v=2\Delta t u_h^n,\,\bm\omega=-2\Delta t\bm p_h^n,\,\bm q=2\Delta t\bm \sigma_h^n$ in the equations of (\ref{eq3.6}), respectively. By the symmetry of the bilinear formulas, adding the above equations, we obtain
\begin{align*}
2\Delta t\mathcal{F}(u_h^n)&=2\Delta t\mathbbm{c}(I_{\bm x}^{\bar{\bm\alpha}}\bm p_h^n,\bm p_h^n)+2\Delta t\mathbbm{e}(\bm\sigma_h^n,\bm\sigma_h^n)
+2\big(u_h^n-\check{u}^{n-1}_h,u_h^n\big)+2\Delta t\mathbbm{d}(u_h^n,u_h^n).
\end{align*}
Following from
\begin{align*}
2\big(u_h^n-\check{u}_h^{n-1},u_h^n\big)\geq\parallel u_h^n\parallel_{L^2(\Omega)}^2-\parallel\check{u}_h^{{n-1}}\parallel_{L^2(\Omega)}^2,
\end{align*}
the Young inequality, the definition of $\mathcal{F}$ and $|\cdot|_{\mathcal{A}}$, and Lemma \ref{lemma4.1}, we have
\begin{align*}
&\parallel u_h^n\parallel_{L^2(\Omega)}^2-\parallel u_h^{{n-1}}\parallel_{L^2(\Omega)}^2+2\Delta t\big|(u_h^n,\bm\sigma_h^n,\bm p_h^n)\big|^2_{\mathcal{A}}
\\
&\leq C\Delta t\parallel u_h^{n-1}\parallel_{L^2(\Omega)}^2
+\Delta t\big(\parallel u_h^n\parallel_{L^2(\Omega)}^2+\parallel f^n\parallel_{L^2(\Omega)}^2\big).
\end{align*}
Summing from n=1,2,...,N, we get
\begin{align*}
&\parallel u_h^N\parallel_{L^2(\Omega)}^2+2\Delta t\sum_{n=1}^N\big|(u_h^n,\bm\sigma_h^n,\bm p_h^n)\big|^2_{\mathcal {A}}
\\
&\leq C\Delta t\sum_{n=1}^N\parallel u_h^{n}\parallel_{L^2(\Omega)}^2+(1+C\Delta t)\parallel u_h^{0}\parallel_{L^2(\Omega)}^2+\Delta t\sum_{n=1}^N\parallel f^n\parallel_{L^2(\Omega)}^2.
\end{align*}
Using the discrete Gr\"{o}nwall inequality, with $C\Delta t<1, \ \forall\, N\geq 1$, there is
\begin{equation}
\begin{split}
&\parallel u_h^N\parallel_{L^2(\Omega)}^2+
2\Delta t\sum_{n=1}^N\big|(u_h^n,\bm\sigma_h^n,\bm p_h^n)\big|^2_{\mathcal{A}}
\leq C\parallel u_h^{0}\parallel_{L^2(\Omega)}^2+C\Delta t\sum_{n=1}^N\parallel f^n\parallel_{L^2(\Omega)}^2.
\end{split}
\end{equation}
\end{proof}

\subsection{Error estimates}
In this subsection we state and discuss the error bounds for the HDG scheme. The main steps of our error analysis follow the classical methods in finite element analysis, i.e., the so-called Galerkin orthogonality property.
As usual, we denote the errors $(e_u^n,\bm e_{\bm\sigma}^n,\bm e_{\bm p}^n)=(u^n-u_h^n,\bm\sigma^n-\bm \sigma^n_h,\bm p^n-\bm p^n_h)$ by
\begin{align*}
\begin{split}
(e_u^n,\bm e_{\bm\sigma}^n,\bm e_{\bm p}^n)=(u^n-\Pi u^n,\bm\sigma^n-\bm\Pi\bm\sigma^n,\bm p^n-\bm\Pi\bm p^n)+(\Pi e_u^n,\bm\Pi
\bm e_{\bm\sigma}^n,\bm\Pi\bm e_{\bm p}^n),
\end{split}
\end{align*}
where $\Pi$ and $\bm\Pi=(\Pi,\Pi)$ are the $L^2$-projection and $(L^2)^2$-projection operators from $\mathbb{V}$ and $\mathbb{Q}$ onto the finite element spaces $\mathbb{V}_h$ and $\mathbb{Q}_h$, respectively.
From (\ref{eq3.6}), we obtain the compact form
\begin{equation}
\Big(\frac{u_h^n-\check{u}_h^{n-1}}{\Delta t},v\Big)+\mathcal {A}(u_h^n,\bm \sigma_{h}^n, \bm p_{h}^n;v,\bm \omega,\bm q)=\mathcal{F}(v),
\end{equation}
where
\begin{equation}
\begin{split}
&~~~\mathcal{A}(u_h^n,\bm \sigma_{h}^n,\bm p_{h}^n;v,\bm\omega,\bm q)
\\
&=\mathbbm{a}(\bm\sigma_h^n,v)+\mathbbm{d}(u_h^n,v)
+\mathbbm{c}(\bm\sigma_h^n,\bm\omega)-\mathbbm{c}(I_{\bm x}^{\bar{\bm\alpha}}\bm p_h^n,\bm\omega)
+\mathbbm{c}(\bm p_h^n,\bm q)-\mathbbm{a}(\bm q,u_h^n)+\mathbbm{e}(\bm\sigma_h^n,\bm q).
\end{split}
\end{equation}

\begin{lem}
Assume that the solution $u$ of problem (\ref{eq1}) is sufficiently regular. Then
\begin{align}\label{erro}
\begin{split}
&~~~\big(\psi^n\partial_\tau u^n-\frac{u_h^n-\check{u}_h^{n-1}}{\Delta t},\Pi e_u^n\big)
+\big|(\Pi e_u^n,\bm\Pi\bm e_{\bm\sigma}^n,\bm\Pi\bm e_{\bm p}^n)\big|^2_{\mathcal{A}}
\\
&=\mathcal{A}\big(\Pi u^n-u^n,\bm\Pi\bm\sigma^n-\bm\sigma^n,\bm\Pi\bm p^n-\bm p^n;\Pi e_u^n,-\bm\Pi \bm e_{\bm p}^n,\bm\Pi\bm e_{\bm\sigma}^n\big).
\end{split}
\end{align}
\end{lem}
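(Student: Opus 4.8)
The plan is to derive the identity (\ref{erro}) directly from the error equation obtained by Galerkin orthogonality, then split each error into its projection-approximation part and its discrete part, and finally identify the terms that collapse by the definition of the $L^2$-projection and the structure of the bilinear form $\mathcal{A}$. First I would recall that the exact solution $(u^n,\bm\sigma^n,\bm p^n)$ satisfies the same compact formulation as $(u_h^n,\bm\sigma_h^n,\bm p_h^n)$ up to the time-discretization remainder: since $u,\bm\sigma,\bm p$ are smooth and $u$ vanishes on $\partial\Omega$, the jump and flux terms in $\mathbbm{a},\mathbbm{d},\mathbbm{e}$ applied to the exact solution vanish, so $\mathcal{A}(u^n,\bm\sigma^n,\bm p^n;v,\bm\omega,\bm q)$ reduces to the consistent weak form of (\ref{eqloworder}), and consequently
$$\big(\psi^n\partial_\tau u^n,v\big)+\mathcal{A}(u^n,\bm\sigma^n,\bm p^n;v,\bm\omega,\bm q)=\mathcal{F}(v)$$
for all $(v,\bm\omega,\bm q)\in\mathbb{V}_h\times\mathbb{Q}_h\times\mathbb{Q}_h$. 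Subtracting the discrete equation from this gives the Galerkin orthogonality
$$\big(\psi^n\partial_\tau u^n-\tfrac{u_h^n-\check u_h^{n-1}}{\Delta t},v\big)+\mathcal{A}(e_u^n,\bm e_{\bm\sigma}^n,\bm e_{\bm p}^n;v,\bm\omega,\bm q)=0.$$

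Next I would make the choice of test functions that mirrors the stability proof: take $v=\Pi e_u^n$, $\bm\omega=-\bm\Pi\bm e_{\bm p}^n$, $\bm q=\bm\Pi\bm e_{\bm\sigma}^n$. Writing $e_u^n=(u^n-\Pi u^n)+\Pi e_u^n$ and similarly for the two vector errors, and using linearity of $\mathcal{A}$ in its first three slots, I split $\mathcal{A}(e_u^n,\bm e_{\bm\sigma}^n,\bm e_{\bm p}^n;\cdot)$ into the "approximation" piece $\mathcal{A}(\Pi u^n-u^n,\bm\Pi\bm\sigma^n-\bm\sigma^n,\bm\Pi\bm p^n-\bm p^n;\Pi e_u^n,-\bm\Pi\bm e_{\bm p}^n,\bm\Pi\bm e_{\bm\sigma}^n)$ — with the sign absorbed as in the statement — plus the "discrete" piece $\mathcal{A}(\Pi e_u^n,\bm\Pi\bm e_{\bm\sigma}^n,\bm\Pi\bm e_{\bm p}^n;\Pi e_u^n,-\bm\Pi\bm e_{\bm p}^n,\bm\Pi\bm e_{\bm\sigma}^n)$. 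The crux is to show that this last discrete piece equals exactly the semi-norm $\big|(\Pi e_u^n,\bm\Pi\bm e_{\bm\sigma}^n,\bm\Pi\bm e_{\bm p}^n)\big|^2_{\mathcal{A}}$. This is precisely the computation already carried out in the proof of Theorem \ref{theorem4.2}: expanding $\mathcal{A}(w,\bm s,\bm r;w,-\bm r,\bm s)=\mathbbm{a}(\bm s,w)+\mathbbm{d}(w,w)+\mathbbm{c}(\bm s,-\bm r)-\mathbbm{c}(I_{\bm x}^{\bar{\bm\alpha}}\bm r,-\bm r)+\mathbbm{c}(\bm r,\bm s)-\mathbbm{a}(\bm s,w)+\mathbbm{e}(\bm s,\bm s)$, the two copies of $\mathbbm{a}(\bm s,w)$ cancel, the symmetric form $\mathbbm{c}$ makes $\mathbbm{c}(\bm s,-\bm r)+\mathbbm{c}(\bm r,\bm s)=0$, and what survives is $\mathbbm{d}(w,w)+\mathbbm{c}(I_{\bm x}^{\bar{\bm\alpha}}\bm r,\bm r)+\mathbbm{e}(\bm s,\bm s)$, which is the definition (\ref{eq3.7}) of $|\cdot|^2_{\mathcal{A}}$.

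Finally I would move the remainder term to the left, so that the identity reads: the time-consistency term tested against $\Pi e_u^n$ plus the semi-norm squared equals the approximation piece of $\mathcal{A}$, which is exactly (\ref{erro}). I expect the main obstacle to be bookkeeping rather than analysis: one must be careful that the jump/flux contributions in $\mathbbm{a}$, $\mathbbm{d}$, $\mathbbm{e}$ evaluated on the \emph{exact} solution genuinely vanish (this uses $\llbracket u\rrbracket|_e=0$ on all $e\in\mathscr{E}_h^B$, $\llbracket\bm\sigma\rrbracket|_e=0$ on $e\in\mathscr{E}_h^i$, and continuity of $\bm\sigma$ across interior edges so that $\{\bm\sigma\}=\bm\sigma$), and that the sign conventions on $\bm\omega=-\bm\Pi\bm e_{\bm p}^n$ are tracked consistently through the $\mathbbm{c}$-terms so that the cancellation producing $+\mathbbm{c}(I_{\bm x}^{\bar{\bm\alpha}}\bm p_h^n,\bm p_h^n)$ (rather than its negative) comes out with the right sign. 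Once those two points are checked, the identity follows by pure linear algebra on the bilinear form together with the definition of the projections.
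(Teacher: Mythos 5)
Your proposal is correct and follows essentially the same route as the paper: consistency of the exact solution with the scheme gives the error equation, the test functions $(\Pi e_u^n,-\bm\Pi\bm e_{\bm p}^n,\bm\Pi\bm e_{\bm\sigma}^n)$ are chosen as in the stability proof, the error is split into projection and discrete parts by linearity, and the discrete part collapses to the semi-norm $|\cdot|^2_{\mathcal{A}}$ by the antisymmetric cancellation of the $\mathbbm{a}$- and $\mathbbm{c}$-terms. You are in fact slightly more explicit than the paper, which merely asserts the identity $\mathcal{A}(w,\bm s,\bm r;w,-\bm r,\bm s)=|(w,\bm s,\bm r)|^2_{\mathcal{A}}$ where you verify it term by term.
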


\begin{proof}
By the consistency of the numerical fluxes,
the exact solution $(u,\bm\sigma,\bm p)$ satisfies (\ref{eq3.4}). Taking $v=\Pi e_u^n,\bm\omega=-\bm\Pi\bm e_{\bm p}^n,\bm q=\bm\Pi\bm e_{\bm\sigma}^n$ and subtracting (\ref{eq3.5}) from (\ref{eq3.4}) yield
\begin{align}\label{erro1}
\begin{split}
\big(\psi^n\partial_\tau u^n-\frac{u_h^n-\check{u}_h^{n-1}}{\Delta t},\Pi e_u^n\big)
+\mathcal{A}\big(e_u^n,\bm e_{\bm\sigma}^n,\bm e_{\bm p}^n;\Pi e_u^n,-\bm\Pi\bm e_{\bm p}^n,\bm\Pi\bm e_{\bm\sigma}^n\big)=0
\end{split}
\end{align}
and
\begin{align}\label{erro2}
\begin{split}
\big|(\Pi e_u^n,\bm\Pi\bm e_{\bm\sigma}^n,\bm\Pi\bm e_{\bm p}^n)\big|^2_{\mathcal {A}}
=\mathcal{A}\big(\Pi e_u^n,\bm\Pi\bm e_{\bm\sigma}^n,\bm\Pi\bm e_{\bm p}^n;\Pi e_u^n,-\bm\Pi\bm e_{\bm p}^n,\bm\Pi\bm e_{\bm\sigma}^n\big).
\end{split}
\end{align}
By the Galerkin orthogonality, there is
\begin{align}\label{erro3}
\begin{split}
&~~~\mathcal{A}\big(e_u^n,\bm e_{\bm\sigma}^n,\bm e_{\bm p}^n;\Pi e_u^n,-\bm\Pi\bm e_{\bm p}^n,\bm\Pi\bm e_{\bm\sigma}^n\big)\\
&=\mathcal{A}\big(\Pi e_u^n,\bm\Pi\bm e_{\bm\sigma}^n,\bm\Pi\bm e_{\bm p}^n;\Pi e_u^n,-\bm\Pi\bm e_{\bm p}^n,\bm\Pi\bm e_{\bm\sigma}^n\big)
\\
&-\mathcal{A}\big(\Pi u^n-u^n,\bm\Pi\bm\sigma^n-\bm\sigma^n,\bm\Pi\bm p^n-\bm p^n;\Pi e_u^n,-\bm\Pi \bm e_{\bm p}^n,\bm\Pi\bm e_{\bm\sigma}^n\big).
\end{split}
\end{align}
Substituting the equalities (\ref{erro2}) and (\ref{erro3}) into (\ref{erro1}) leads to the desired result.
\end{proof}
%

Next we review two lemmas for our analysis. The first one is the standard approximation result for the  $L^2$-projection operator $\Pi$ from $H^{s+1}(E)$ onto $V_h(E)=\{v;v\big|_E\in P^k(E)\}$ satisfying $\Pi v=v$ for any $v\in P^k(E)$. The second one is the standard trace inequality.
\begin{lem} [\cite{r4}]\label{lemma4.3}
 Let $v\in H^{s+1}(E),\,s\geq 0$. $\Pi$ is the $L^2$-projection operator from $H^{s+1}(E)$ onto $V_h(E)$ such that $\Pi v=v$ for any $v\in P^k(E)$. Then, for $m=0,1,$
\begin{align}
\begin{cases}
\big|v-\Pi v\big|_{H^m(E)}\leq C h_{E}^{min\{s,k\}+1-m}\parallel v\parallel_{H^{s+1}(E)},
\\
\parallel v-\Pi v\parallel_{L^2(\partial E)}\leq C h_{E}^{min\{s,k\}+\frac{1}{2}}\parallel v\parallel_{H^{s+1}(E)}.
\end{cases}
\end{align}
\end{lem}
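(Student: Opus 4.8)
The plan is to reduce both estimates to the fixed reference triangle $\hat E$ by an affine change of variables and then invoke a Bramble--Hilbert (Deny--Lions) argument. Fix an affine diffeomorphism $F_E:\hat E\to E$, $F_E(\hat x)=B_E\hat x+b_E$. By the mesh regularity hypothesis $h_E/\rho_E\le C$ one has the familiar bounds $\|B_E\|\le C h_E$, $\|B_E^{-1}\|\le C h_E^{-1}$, and $|\det B_E|$ comparable to $h_E^{2}$. For $w$ on $E$ write $\hat w=w\circ F_E$; the chain rule together with the change-of-variables formula gives, for every integer $\ell\ge 0$,
\begin{align*}
|\hat w|_{H^\ell(\hat E)}\le C\|B_E\|^{\ell}|\det B_E|^{-1/2}\,|w|_{H^\ell(E)},\qquad
|w|_{H^\ell(E)}\le C\|B_E^{-1}\|^{\ell}|\det B_E|^{1/2}\,|\hat w|_{H^\ell(\hat E)}.
\end{align*}
Since the Jacobian of $F_E$ is constant, the $L^2$-projection commutes with the pull-back, i.e. $\widehat{\Pi v}=\hat\Pi\hat v$, where $\hat\Pi$ is the $L^2(\hat E)$-projection onto $P^k(\hat E)$; hence $\widehat{v-\Pi v}=\hat v-\hat\Pi\hat v$.

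Next set $r=\min\{s,k\}$, so that $P^r(\hat E)\subset P^k(\hat E)$ and $m\le 1\le r+1$. On the fixed reference element $P^k(\hat E)$ is finite-dimensional, so $\hat\Pi$ is bounded from $L^2(\hat E)$ into $H^1(\hat E)$; therefore, for any $\hat p\in P^r(\hat E)$, using $\hat\Pi\hat p=\hat p$,
\begin{align*}
|\hat v-\hat\Pi\hat v|_{H^m(\hat E)}=|(\hat v-\hat p)-\hat\Pi(\hat v-\hat p)|_{H^m(\hat E)}\le\|\hat v-\hat p\|_{H^m(\hat E)}+C\|\hat v-\hat p\|_{L^2(\hat E)}\le C\|\hat v-\hat p\|_{H^{r+1}(\hat E)}.
\end{align*}
Taking the infimum over $\hat p\in P^r(\hat E)$ and applying the Deny--Lions lemma yields $|\hat v-\hat\Pi\hat v|_{H^m(\hat E)}\le C|\hat v|_{H^{r+1}(\hat E)}$. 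Transporting back with the two scaling inequalities and the bounds on $B_E$,
\begin{align*}
|v-\Pi v|_{H^m(E)}\le C\|B_E^{-1}\|^{m}|\det B_E|^{1/2}|\hat v-\hat\Pi\hat v|_{H^m(\hat E)}\le C h_E^{-m}|\det B_E|^{1/2}|\hat v|_{H^{r+1}(\hat E)}\le C h_E^{r+1-m}|v|_{H^{r+1}(E)},
\end{align*}
and bounding $|v|_{H^{r+1}(E)}\le\|v\|_{H^{s+1}(E)}$ gives the first estimate.

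For the boundary estimate I would combine this with a scaled trace inequality. Starting from the usual trace theorem $\|\hat w\|_{L^2(\partial\hat E)}\le C\|\hat w\|_{H^1(\hat E)}$ on $\hat E$ and scaling (the edge measure picks up a factor $\|B_E\|\le Ch_E$, the volume factors go as above), one obtains for every $w\in H^1(E)$
\begin{align*}
\|w\|_{L^2(\partial E)}^2\le C\big(h_E^{-1}\|w\|_{L^2(E)}^2+h_E\,|w|_{H^1(E)}^2\big).
\end{align*}
Inserting $w=v-\Pi v$ and the first estimate for $m=0$ and $m=1$ gives $\|v-\Pi v\|_{L^2(\partial E)}^2\le C(h_E^{-1}h_E^{2(r+1)}+h_E\,h_E^{2r})\|v\|_{H^{s+1}(E)}^2=C h_E^{2r+1}\|v\|_{H^{s+1}(E)}^2$, and taking square roots yields $\|v-\Pi v\|_{L^2(\partial E)}\le C h_E^{r+1/2}\|v\|_{H^{s+1}(E)}$ with $r=\min\{s,k\}$, as claimed. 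The only genuinely delicate ingredient is the Bramble--Hilbert step --- in particular its polynomial-preserving, dimension-stable form on the reference element and, when $s$ is not an integer, its version for fractional-order Sobolev spaces; everything else is bookkeeping of the affine scaling constants, which is exactly where the mesh regularity assumption $h_E/\rho_E\le C$ enters.
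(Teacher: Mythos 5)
The paper does not prove this lemma; it is quoted verbatim from the cited reference \cite{r4} as a standard approximation property of the $L^2$-projection. Your proof is the standard and correct argument for it: affine scaling to the reference triangle (where shape regularity $h_E/\rho_E\le C$ supplies the bounds on $B_E$, $B_E^{-1}$, $\det B_E$), commutation of the $L^2$-projection with the pull-back because the Jacobian is constant, the polynomial-preserving Bramble--Hilbert/Deny--Lions step using the $L^2\to H^1$ boundedness of $\hat\Pi$ on the finite-dimensional reference space, and finally the scaled trace inequality combined with the $m=0$ and $m=1$ volume estimates to get the $h_E^{\min\{s,k\}+1/2}$ edge bound. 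The only point you rightly flag as delicate --- the fractional-order version of Deny--Lions when $s$ is not an integer --- is covered by the Dupont--Scott form of the Bramble--Hilbert lemma, so there is no gap.
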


\begin{lem}[\cite{r4}]\label{lemma4.4}
 There exists a generic  constant $C$ being independent of $h_E$, for any $v\in V_h(E)$, such that
\begin{equation}
\parallel v\parallel_{L^2(\partial E)}\leq Ch_E^{-\frac{1}{2}}\parallel v\parallel_{L^2(E)}.
\end{equation}
\end{lem}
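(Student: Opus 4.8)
This is the standard inverse (or local) trace inequality; the route is a scaling argument to a fixed reference element together with the equivalence of norms on a finite--dimensional polynomial space. Fix a reference triangle $\hat E$, and for each $E\in\mathscr{E}_h$ let $F_E:\hat E\to E$, $F_E(\hat{\bm x})=B_E\hat{\bm x}+\bm c_E$, be the affine bijection. For $v\in V_h(E)=P^k(E)$ put $\hat v:=v\circ F_E\in P^k(\hat E)$. The affine change of variables gives $\parallel v\parallel_{L^2(E)}^2=|\det B_E|\,\parallel\hat v\parallel_{L^2(\hat E)}^2$, and on each of the three edges $e\subset\partial E$, with image $\hat e\subset\partial\hat E$, the one--dimensional change of variables gives $\parallel v\parallel_{L^2(e)}^2=\frac{|e|}{|\hat e|}\,\parallel\hat v\parallel_{L^2(\hat e)}^2$; summing over the edges,
\begin{align*}
\parallel v\parallel_{L^2(\partial E)}^2\le\Big(\max_{e\subset\partial E}\frac{|e|}{|\hat e|}\Big)\parallel\hat v\parallel_{L^2(\partial\hat E)}^2.
\end{align*}
Observe that only $\det B_E$, not $B_E$ or its inverse, will ever enter, which is why this estimate is elementary.

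On the fixed reference triangle, $P^k(\hat E)$ is finite--dimensional, $\parallel\cdot\parallel_{L^2(\hat E)}$ is a norm on it, and $\hat v\mapsto\parallel\hat v\parallel_{L^2(\partial\hat E)}$ is a semi-norm; equivalence of norms in finite dimensions therefore furnishes a constant $\hat C=\hat C(k,\hat E)$, independent of $E$, with $\parallel\hat v\parallel_{L^2(\partial\hat E)}\le\hat C\,\parallel\hat v\parallel_{L^2(\hat E)}$ for all $\hat v\in P^k(\hat E)$. Combining this with the two identities above yields
\begin{align*}
\parallel v\parallel_{L^2(\partial E)}^2\le\Big(\max_{e\subset\partial E}\frac{|e|}{|\hat e|}\Big)\hat C^2\,\frac{1}{|\det B_E|}\,\parallel v\parallel_{L^2(E)}^2.
\end{align*}

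Finally I would estimate the geometric factors by mesh regularity: $|e|\le h_E$ for every edge, so $\max_{e}|e|/|\hat e|\le h_E/\min_{\hat e}|\hat e|$; and $|\det B_E|=|E|/|\hat E|$, where $|E|$ is at least the area of its inscribed circle, whose radius is $\rho_E/2\ge h_E/(2C)$ by $h_E/\rho_E\le C$, so $|\det B_E|\ge c\,h_E^2$ with $c$ depending only on $C$ and $\hat E$. Substituting gives $\parallel v\parallel_{L^2(\partial E)}^2\le C'\,h_E^{-1}\,\parallel v\parallel_{L^2(E)}^2$ with $C'$ depending only on $k$, the reference element, and the shape--regularity constant $C$; taking square roots proves the lemma. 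The only step requiring care is the bookkeeping of constants, namely verifying that $\hat C$ depends solely on $k$ and $\hat E$ and that every element--dependent factor ($|e|/|\hat e|$ and $1/|\det B_E|$) is controlled purely through $h_E$ and the fixed constant in $h_E/\rho_E\le C$, so that no dependence on the individual element $E$ survives.
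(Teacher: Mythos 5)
Your proof is correct. The paper does not prove this lemma at all --- it is quoted with a citation to Castillo--Cockburn--Perugia--Sch\"otzau --- and your scaling argument (pull back to a fixed reference triangle, invoke equivalence of norms on the finite-dimensional space $P^k(\hat E)$, then control $|e|\le h_E$ and $|\det B_E|=|E|/|\hat E|\ge c\,h_E^2$ via the shape-regularity condition $h_E/\rho_E\le C$) is precisely the standard route by which this inverse trace inequality is established in the cited literature. The bookkeeping of the geometric factors is done correctly, so nothing is missing.
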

Now we are ready to prove our main results.
\subsubsection{The characteristic term}

In this subsection, we estimate the first left-side term of (\ref{erro}).
\begin{lem}[\cite{r2}]\label{lemma4.5}
If $\bm b\in L^\infty(J;W^{1,\infty}(\Omega)^2)$, for any function $ v\in H^1(\Omega)$ and each $n$,
\begin{align}
\parallel v-\check{v}\parallel_{L^2(\Omega)}\leq C\Delta t\parallel\nabla v\parallel_{L^2(\Omega)},
\end{align}
where $\check{v}=v(\check{\bm x})=v(\bm x-\bm b^n\Delta t)$.
\end{lem}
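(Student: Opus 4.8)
The plan is to write $v-\check{v}$ as an integral of $\nabla v$ along the characteristic segment joining $\check{\bm x}=\bm x-\bm b^n\Delta t$ to $\bm x$, and then to reduce the resulting expression to $\parallel\nabla v\parallel_{L^2(\Omega)}$ by a change of variables whose Jacobian is close to the identity when $\Delta t$ is small. Since both sides of the asserted inequality are continuous with respect to the $H^1$-norm — indeed with respect to the $L^2$-norm, by Lemma \ref{lemma4.1} — I would first establish it for $v\in C^\infty$ extended by zero outside $\Omega$; the zero extension is the natural one here in view of the homogeneous Dirichlet condition to which the lemma is applied, and it also disposes of the characteristics that leave $\Omega$. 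The general case then follows by density.

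For smooth $v$, writing $\bm b^n(\bm x)=\bm b(\bm x,t^n)$ and applying the fundamental theorem of calculus along $s\mapsto\bm x-s\bm b^n(\bm x)\Delta t$, $s\in[0,1]$, gives
\begin{align*}
v(\bm x)-\check{v}(\bm x)=\Delta t\int_0^1\bm b^n(\bm x)\cdot\nabla v\big(\bm x-s\bm b^n(\bm x)\Delta t\big)\,ds.
\end{align*}
Taking $L^2(\Omega)$-norms, using Minkowski's integral inequality to move the norm inside the $s$-integral, and bounding $|\bm b^n|$ by $\parallel\bm b\parallel_{L^\infty(J;W^{1,\infty}(\Omega)^2)}$ reduces matters to estimating $\int_\Omega\big|\nabla v(\Phi_s(\bm x))\big|^2\,d\bm x$ uniformly for $s\in[0,1]$, where $\Phi_s(\bm x)=\bm x-s\Delta t\,\bm b^n(\bm x)$.

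This last step is where the hypothesis $\bm b\in L^\infty(J;W^{1,\infty}(\Omega)^2)$ enters. Since $D\Phi_s=\mathrm{Id}-s\Delta t\,D\bm b^n$ and $\parallel D\bm b^n\parallel_{L^\infty}$ is bounded independently of $n$, there is a threshold $\Delta t_0$, depending only on $\parallel\bm b\parallel_{L^\infty(J;W^{1,\infty})}$, such that for $\Delta t\leq\Delta t_0$ and every $s\in[0,1]$ the map $\Phi_s$ is a bi-Lipschitz homeomorphism with $\tfrac12\leq|\det D\Phi_s|\leq\tfrac32$. A change of variables $\bm y=\Phi_s(\bm x)$ then gives $\int_\Omega|\nabla v(\Phi_s(\bm x))|^2\,d\bm x\leq 2\parallel\nabla v\parallel_{L^2(\Omega)}^2$, and combining the three displays yields the claim for $\Delta t\leq\Delta t_0$; for $\Delta t>\Delta t_0$ the bound is trivial after enlarging $C$. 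I expect the change-of-variables step — checking that $\Phi_s$ is invertible and that its Jacobian stays bounded above and below — to be the only genuine obstacle; the identity above and Minkowski's inequality are routine, and the density reduction is standard.
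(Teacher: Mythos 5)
The paper offers no proof of this lemma; it is quoted from Chen \cite{r2}, and your argument --- the fundamental theorem of calculus along the characteristic segment $s\mapsto\bm x-s\Delta t\,\bm b^n(\bm x)$, Minkowski's integral inequality, and a change of variables whose Jacobian $\det(\mathrm{Id}-s\Delta t\,D\bm b^n)$ stays near $1$ for $\Delta t$ small thanks to $\bm b\in L^\infty(J;W^{1,\infty}(\Omega)^2)$ --- is exactly the standard proof given in that reference, and it is correct. The only delicate point is the one you already touch on: with the zero extension, the density step (and the ``trivial'' case $\Delta t>\Delta t_0$, which needs a Poincar\'e inequality) works only for $v\in H^1_0(\Omega)$; for a general $v\in H^1(\Omega)$ (e.g.\ $v\equiv 1$) the extended function jumps across the inflow part of $\partial\Omega$ and the right-hand side cannot control the left, so the lemma as stated really presupposes either $v\in H^1_0(\Omega)$, a Sobolev extension with $\parallel\nabla v\parallel_{L^2}$ replaced by $\parallel v\parallel_{H^1}$, or characteristics that do not exit $\Omega$ --- an imprecision in the statement, not a gap in your argument.
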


The following result is a straightforward consequence of the estimate of the first left-side term of (\ref{erro}).
\begin{thm}\label{theorem4.6}
Assume that the solution $u$ of problem (\ref{eq1}) is sufficiently smooth and $u_h^n$ satisfies (\ref{eq3.5}). If $\bm b\in L^\infty(J;W^{1,\infty}(\Omega)^2)$, we have
\begin{equation}\label{eq4.15}
\begin{split}
&\left(\psi^n\partial_\tau u^n-\frac{u_h^n-\check{u}_h^{n-1}}{\Delta t},\Pi e_u^n\right)
\\
&\geq \frac{1}{2\Delta t}\big(\parallel\Pi e_u^n\parallel_{L^2(\Omega)}^2-\parallel\Pi e_u^{n-1}\parallel_{L^2(\Omega)}^2\big)-C\parallel\Pi e_u^{n-1}\parallel_{L^2(\Omega)}^2
\\
&~~~~-C\Delta t\parallel\partial_{\tau\tau}u\parallel_{L^2(J^n;L^2(\Omega))}^2
-\frac{C}{\Delta t}\parallel\partial_t(\Pi
u-u)\parallel_{L^2(J^n;L^2(\Omega))}^2
\\
&~~~~ -C\parallel\nabla(\Pi u^{n-1}-u^{n-1})\parallel_{L^2(\Omega)}^2
-C\parallel\Pi e_u^n\parallel_{L^2(\Omega)}^2.
\end{split}
\end{equation}
\end{thm}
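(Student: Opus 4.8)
I would insert the exact characteristic quotient and split the left-hand side of (\ref{eq4.15}) into three pieces, estimating each separately. Set $\theta^n:=\Pi e_u^n=\Pi u^n-u_h^n\in\mathbb{V}_h$ and $\eta^n:=u^n-\Pi u^n$, so that $e_u^n=\eta^n+\theta^n$, and for a function $w$ write $\check{w}:=w\big(\check{\bm x}(\bm x,t^{n-1})\big)$. Since $\check{u}_h^{n-1}$ is the characteristic trace of $u_h^{n-1}$ and trace-back is linear,
\begin{align*}
\psi^n\partial_\tau u^n-\frac{u_h^n-\check{u}_h^{n-1}}{\Delta t}
={}&\Big(\psi^n\partial_\tau u^n-\frac{u^n-\check{u}^{n-1}}{\Delta t}\Big)\\
&+\frac{\theta^n-\check{\theta}^{n-1}}{\Delta t}+\frac{\eta^n-\check{\eta}^{n-1}}{\Delta t},
\end{align*}
and I would pair this identity with $\theta^n=\Pi e_u^n$ in $L^2(\Omega)$.

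\textbf{The coercive term.} For the middle piece, the elementary identity $2(a-b,a)=\|a\|^2-\|b\|^2+\|a-b\|^2$ gives $\big(\tfrac{\theta^n-\check{\theta}^{n-1}}{\Delta t},\theta^n\big)\ge\tfrac{1}{2\Delta t}\big(\|\theta^n\|_{L^2(\Omega)}^2-\|\check{\theta}^{n-1}\|_{L^2(\Omega)}^2\big)$, and Lemma~\ref{lemma4.1} (applied to $v=\theta^{n-1}$) replaces $\|\check{\theta}^{n-1}\|_{L^2(\Omega)}^2$ by $(1+C\Delta t)\|\theta^{n-1}\|_{L^2(\Omega)}^2$. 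This produces the first two terms on the right of (\ref{eq4.15}), namely $\tfrac{1}{2\Delta t}\big(\|\Pi e_u^n\|_{L^2(\Omega)}^2-\|\Pi e_u^{n-1}\|_{L^2(\Omega)}^2\big)-C\|\Pi e_u^{n-1}\|_{L^2(\Omega)}^2$.

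\textbf{The remaining terms.} I would split $\eta^n-\check{\eta}^{n-1}=(\eta^n-\eta^{n-1})+(\eta^{n-1}-\check{\eta}^{n-1})$. By the assumed regularity of $u$, $\eta^n-\eta^{n-1}=\int_{t^{n-1}}^{t^n}\partial_s(u-\Pi u)\,ds$ (since $\Pi$ acts only in the space variables), whence $\|\eta^n-\eta^{n-1}\|_{L^2(\Omega)}\le(\Delta t)^{1/2}\|\partial_t(\Pi u-u)\|_{L^2(J^n;L^2(\Omega))}$ by Cauchy--Schwarz; and Lemma~\ref{lemma4.5} gives $\|\eta^{n-1}-\check{\eta}^{n-1}\|_{L^2(\Omega)}\le C\Delta t\|\nabla(\Pi u^{n-1}-u^{n-1})\|_{L^2(\Omega)}$. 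Dividing by $\Delta t$, pairing with $\theta^n$, and applying Young's inequality with the $\Delta t$-powers balanced so that $\tfrac{1}{\Delta t}\|\eta^n-\eta^{n-1}\|_{L^2(\Omega)}\|\theta^n\|_{L^2(\Omega)}$ is absorbed into $\tfrac{C}{\Delta t}\|\partial_t(\Pi u-u)\|_{L^2(J^n;L^2(\Omega))}^2+C\|\theta^n\|_{L^2(\Omega)}^2$, one obtains the fourth and fifth terms of (\ref{eq4.15}) together with harmless $-C\|\Pi e_u^n\|_{L^2(\Omega)}^2$ contributions. For the first piece, Remark~\ref{Remark 2} bounds $\big\|\psi^n\partial_\tau u^n-\tfrac{u^n-\check{u}^{n-1}}{\Delta t}\big\|_{L^2(\Omega)}^2$ by $C\|\partial_{\tau\tau}u\|_{L^2(J^n;L^2(\Omega))}^2\Delta t$, and Young's inequality converts the associated inner product into the lower bound $-C\Delta t\|\partial_{\tau\tau}u\|_{L^2(J^n;L^2(\Omega))}^2-C\|\Pi e_u^n\|_{L^2(\Omega)}^2$. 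Summing the three estimates and collecting every $\|\Pi e_u^n\|_{L^2(\Omega)}^2$-term under a single generic constant $C$ yields (\ref{eq4.15}).

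\textbf{Main obstacle.} Once the splitting above is fixed, the argument is essentially bookkeeping; the point that requires care is the accounting of the powers of $\Delta t$ — in particular the temporal increment of $\eta$ must enter with weight $\tfrac{C}{\Delta t}\|\partial_t(\Pi u-u)\|_{L^2(J^n;L^2(\Omega))}^2$ (one factor $(\Delta t)^{1/2}$ from Cauchy--Schwarz, one $(\Delta t)^{-1}$ from the difference quotient, one more from the Young split) — together with the minor adjustment at the initial step $n=1$, where the convention $\check{u}_h^0=u^0$ makes one work with $\Pi e_u^0=\Pi u^0-u^0$ instead of a $\check{\theta}^0$-term. I do not expect any genuine difficulty beyond this constant-chasing.
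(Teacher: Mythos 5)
Your proposal is correct and follows essentially the same route as the paper: the same three-way splitting into the characteristic truncation error, the discrete part $\tfrac{\theta^n-\check{\theta}^{n-1}}{\Delta t}$ handled by the identity $2(a-b,a)=\|a\|^2-\|b\|^2+\|a-b\|^2$ together with Lemma~\ref{lemma4.1}, and the projection part further split into a temporal increment plus a trace-back increment estimated via Lemma~\ref{lemma4.5}; your $\Delta t$ bookkeeping also matches the paper's. No issues.
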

\begin{proof}
From (\ref{erro}), it can be noted that
\begin{align}\label{eq4.16}
\begin{split}
&\left(\psi^n\partial_\tau u^n-\frac{u_h^n-\check{u}_h^{n-1}}{\Delta t},\Pi e_u^n\right)
\\
&=\left(\frac{\Pi e_u^n-\Pi\check{e}_u^{n-1}}{\Delta t},\Pi e_u^n\right)
+\left(\psi^n\partial_\tau u^n-\frac{u^n-\check{u}^{n-1}}{\Delta t},\Pi e_u^n\right)
\\
&~~~~-\left(\frac{(\Pi u^n-u^n)-(\Pi\check{u}^{n-1}-\check{u}^{n-1})}{\Delta t},\Pi e_u^n\right)
\\
&=\sum_{i=1}^3\mathcal {B}_i.
\end{split}
\end{align}
Using Lemma \ref{lemma4.1}, we obtain
\begin{align*}
\mathcal{B}_1&=\big(\frac{\Pi e_u^n-\Pi \check{e}_u^{n-1}}{\Delta t},\Pi e_u^n\big)\\
&=\frac{1}{2\Delta t}\big(\parallel\Pi e_u^n\parallel_{L^2(\Omega)}^2-\parallel\Pi\check{e}_u^{n-1}\parallel_{L^2(\Omega)}^2
+\parallel\Pi e_u^n-\Pi\check{e}^{n-1}_u\parallel_{L^2(\Omega)}^2\big)
\\
&\geq\frac{1}{2\Delta t}\big(\parallel\Pi e_u^n\parallel_{L^2(\Omega)}^2-\parallel\Pi\check{e}_u^{n-1}\parallel_{L^2(\Omega)}^2\big)
\\
&\geq\frac{1}{2\Delta t}\big(\parallel\Pi e_u^n\parallel_{L^2(\Omega)}^2-\parallel\Pi e_u^{n-1}\parallel_{L^2(\Omega)}^2\big)-C\parallel\Pi e_u^{n-1}\parallel_{L^2(\Omega)}^2,
\end{align*}
where $\Pi\check{e}_u^{n-1}=\Pi\check{u}^{n-1}-\check{u}_h^{n-1}$. Also by the Taylor expansion and the H\"{o}lder inequality, there are
\begin{align*}
\mid\mathcal{B}_2\mid&=\left|\left(\psi^n\partial_\tau u^n-\frac{u^n-\check{u}^{n-1}}{\Delta t},\Pi e_u^n\right)\right|
\\
&\leq C\Delta t\parallel\partial_{\tau\tau}u\parallel^2_{L^2({J}^n;L^2(\Omega))}+C\parallel\Pi e_u^n\parallel^2_{L^2(\Omega)}
\end{align*}
and
\begin{align*}
-\mathcal{B}_3&=\Big(\frac{(\Pi u^n-u^n)-(\Pi\check{u}^{n-1}-\check{u}^{n-1})}{\Delta t},\Pi e_u^n\Big)
\\
&=\Big(\frac{(\Pi u^n-u^n)-(\Pi u^{n-1}-u^{n-1})}{\Delta t},\Pi e_u^n\Big)
\\
&+\Big(\frac{(\Pi u^{n-1}-u^{n-1})-(\Pi\check{u}^{n-1}-\check{u}^{n-1})}{\Delta t},\Pi e_u^n\Big)
\\
&=\mathcal{S}_1+\mathcal {S}_2,
\end{align*}
where
\begin{align*}
\mathcal{S}_1&=\Big(\frac{(\Pi u^n-u^n)-(\Pi u^{n-1}-u^{n-1})}{\Delta t},\Pi e_u^n\Big)
\\
&\leq\frac{1}{\Delta t}\parallel\Pi e_u^n\parallel_{L^2(\Omega)}\int_{t^{n-1}}^{t^n}\parallel
\partial_t(\Pi u-u)\parallel_{L^2(\Omega)}dt
\\
&\leq C\parallel \Pi e_u^n\parallel_{L^2(\Omega)}^2+\frac{C}{\Delta t}\parallel\partial_t(\Pi
u-u)\parallel_{L^2(J^n;L^2(\Omega))}^2,
\end{align*}
and
\begin{align*}
\mathcal{S}_2&=\Big(\frac{(\Pi u^{n-1}-u^{n-1})-(\Pi\check{u}^{n-1}-\check{u}^{n-1})}{\Delta t},\Pi e_u^n\Big)
\\
&\leq C\parallel\Pi e^n_u\parallel_{L^2(\Omega)}^2+C\parallel\nabla(\Pi u^{n-1}-u^{n-1})\parallel_{L^2(\Omega)}^2,
\end{align*}
follow from Cauchy-Schwarz's inequality, Young's inequality and Lemma~\ref{lemma4.5}.
Substituting  $\mathcal{B}_1,\mathcal{B}_2,\mathcal{B}_3$ into (\ref{eq4.16}), the desired result is reached.
\end{proof}

\subsubsection{The right-hand side term}

In this subsection, we use the general analytic methods to get the bound of the right side term of (\ref{erro}).
\begin{thm}\label{theorem4.7}
Let $u$ be sufficiently smooth solution of (\ref{eqloworder}). $(\Pi u^n,\bm\Pi\bm\sigma^n
,\bm\Pi\bm p^n)$ are standard $L^2$-projection operators of $(u^n,\bm\sigma^n,\bm p^n)$, and $(u_h^n,\bm\sigma_h^n,\bm p_h^n)$ solve (\ref{eq3.5}). If $\bm b\in L^\infty(J;W^{1,\infty}(\Omega)^2)$, we have
\begin{align}
\begin{split}
&~~~\big|\mathcal{A}\big(\Pi u^n-u^n,\bm\Pi\bm\sigma^n-\bm\sigma^n,\bm\Pi\bm p^n-\bm p^n;
\Pi e_u^n,-\bm\Pi\bm e_{\bm p}^n,\bm\Pi\bm e_{\bm\sigma}^n\big)\big|
\\
&\leq C\epsilon_{\alpha_1}\int_c^d \parallel\Pi e_{p_x}^n(\cdot,y)\parallel^2_{J_{R,0}^{-\alpha_1/2}(a,b)}dy
+\left(\frac{C}{\epsilon_{1}}+C\epsilon_{1}\right)h^{2k+1}+\frac{C}{\epsilon_{\alpha_1}}h^{2k+2}
\\
&
~~~~ +C\epsilon_{\alpha_2}\int_a^b \parallel\Pi e_{p_y}^n(x,\cdot)\parallel^2_{ J_{R,0}^{-\alpha_2/2}(c,d)}dx+\left(\frac{C}{\epsilon_{2}}+C\epsilon_{2}\right)h^{2k+1}+\frac{C}{\epsilon_{\alpha_2}}h^{2k+2}
\\
&~~~~ +\frac{\epsilon_{1}}{2}\sum_{e\in\mathscr{E}_h^B}
\parallel\llbracket\Pi e_u^n\rrbracket\parallel_{L^2(e)}^2+\frac{\epsilon_{2}}{2}\sum_{e\in\mathscr{E}_h^i}\parallel
\llbracket\bm\Pi\bm e_{\bm\sigma}^n\rrbracket\parallel_{L^2(e)}^2.
\end{split}
\end{align}
\end{thm}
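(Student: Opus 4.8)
The plan is to expand the bilinear form $\mathcal{A}$ term by term and estimate each contribution using the approximation properties of the $L^2$-projection (Lemma \ref{lemma4.3}), the trace inequality (Lemma \ref{lemma4.4}), the adjoint/positivity properties of the fractional integral operators (Lemma \ref{lemma2.3}, Lemma \ref{lemma2.8}, Definition \ref{definition2.11}), and Young's inequality with the weight parameters $\epsilon_1,\epsilon_2,\epsilon_{\alpha_1},\epsilon_{\alpha_2}$. Recalling the definition of $\mathcal{A}$ applied to the arguments $\big(\Pi u^n-u^n,\bm\Pi\bm\sigma^n-\bm\sigma^n,\bm\Pi\bm p^n-\bm p^n;\Pi e_u^n,-\bm\Pi\bm e_{\bm p}^n,\bm\Pi\bm e_{\bm\sigma}^n\big)$, I would write it out as the sum
\begin{align*}
\mathbbm{a}(\bm\Pi\bm\sigma^n-\bm\sigma^n,\Pi e_u^n)
+\mathbbm{d}(\Pi u^n-u^n,\Pi e_u^n)
-\mathbbm{c}(\bm\Pi\bm\sigma^n-\bm\sigma^n,\bm\Pi\bm e_{\bm p}^n)
-\mathbbm{c}\big(I_{\bm x}^{\bar{\bm\alpha}}(\bm\Pi\bm p^n-\bm p^n),-\bm\Pi\bm e_{\bm p}^n\big)\\
+\mathbbm{c}(\bm\Pi\bm p^n-\bm p^n,\bm\Pi\bm e_{\bm\sigma}^n)
-\mathbbm{a}(\bm\Pi\bm e_{\bm\sigma}^n,\Pi u^n-u^n)
+\mathbbm{e}(\bm\Pi\bm\sigma^n-\bm\sigma^n,\bm\Pi\bm e_{\bm\sigma}^n),
\end{align*}
and then handle each of the seven pieces in turn.

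\textbf{Key steps.} First, the two $\mathbbm{c}$-volume terms involving $\bm p$: since $\Pi$ is the $L^2$-projection and $\bm\Pi\bm e_{\bm\sigma}^n,\bm\Pi\bm e_{\bm p}^n$ are polynomials, the term $\mathbbm{c}(\bm\Pi\bm p^n-\bm p^n,\bm\Pi\bm e_{\bm\sigma}^n)=(\bm\Pi\bm p^n-\bm p^n,\bm\Pi\bm e_{\bm\sigma}^n)$ actually vanishes by orthogonality of the projection error; similarly the $\mathbbm{d}$-term $\mathbbm{d}(\Pi u^n-u^n,\Pi e_u^n)=\epsilon_1(\llbracket\Pi u^n-u^n\rrbracket,\llbracket\Pi e_u^n\rrbracket)_{\mathscr{E}_h^B}$ must be bounded by $\frac{\epsilon_1}{2}\sum_e\|\llbracket\Pi e_u^n\rrbracket\|^2_{L^2(e)}+\frac{C}{\epsilon_1}\sum_e\|\llbracket\Pi u^n-u^n\rrbracket\|^2_{L^2(e)}$, where the second sum is $\le \frac{C}{\epsilon_1}h^{2k+1}$ by the trace estimate in Lemma \ref{lemma4.3}; this produces the $\frac{C}{\epsilon_1}h^{2k+1}$ and $\frac{\epsilon_1}{2}\sum_e\|\llbracket\Pi e_u^n\rrbracket\|^2$ terms in the claimed bound. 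Analogously the penalty term $\mathbbm{e}(\bm\Pi\bm\sigma^n-\bm\sigma^n,\bm\Pi\bm e_{\bm\sigma}^n)$ gives $\frac{\epsilon_2}{2}\sum_{e\in\mathscr{E}_h^i}\|\llbracket\bm\Pi\bm e_{\bm\sigma}^n\rrbracket\|^2$ plus $\frac{C}{\epsilon_2}h^{2k+1}$. Second, the two $\mathbbm{a}$-terms: expand $\mathbbm{a}(\bm w,v)=(\bm w,\nabla v)-(\{\bm w\}\cdot\bm n_e,\llbracket v\rrbracket)_{\mathscr{E}_h^B}$; the volume part $(\bm\Pi\bm\sigma^n-\bm\sigma^n,\nabla\Pi e_u^n)$ is zero by $L^2$-orthogonality since $\nabla\Pi e_u^n$ is a polynomial of one degree lower, while the edge part is controlled by the trace estimates on $\|\bm\Pi\bm\sigma^n-\bm\sigma^n\|_{L^2(\partial E)}$ combined with Lemma \ref{lemma4.4} applied to the discrete quantities $\llbracket\Pi e_u^n\rrbracket$ or $\bm\Pi\bm e_{\bm\sigma}^n$, and absorbed via Young into the corresponding $h^{2k+1}$ and penalty pieces (with $\frac{C\epsilon_i}{2}$-type contributions that account for the $C\epsilon_1 h^{2k+1}$ and $C\epsilon_2 h^{2k+1}$ in the statement). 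Third, the crucial term $-\mathbbm{c}\big(I_{\bm x}^{\bar{\bm\alpha}}(\bm\Pi\bm p^n-\bm p^n),-\bm\Pi\bm e_{\bm p}^n\big)=\big(I_{\bm x}^{\bar{\bm\alpha}}(\bm\Pi\bm p^n-\bm p^n),\bm\Pi\bm e_{\bm p}^n\big)$: here I would use the adjoint property (Lemma \ref{lemma2.3}) to move half of each fractional integral onto $\bm\Pi\bm e_{\bm p}^n$, write the resulting pairing in terms of the $J_{R,0}^{-\alpha_i/2}$-seminorms via Definition \ref{definition2.11}, and then apply Cauchy–Schwarz in those fractional norms followed by Young's inequality. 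This is what generates the $C\epsilon_{\alpha_1}\int_c^d\|\Pi e_{p_x}^n(\cdot,y)\|^2_{J_{R,0}^{-\alpha_1/2}(a,b)}dy + \frac{C}{\epsilon_{\alpha_1}}\int_c^d\|\Pi\bm p^n-\bm p^n\|^2_{J_{R,0}^{-\alpha_1/2}}dy$ structure; the second factor is then bounded by $\frac{C}{\epsilon_{\alpha_1}}h^{2k+2}$ using Theorem \ref{theorem2.10} (the embedding $L^2\hookrightarrow J_{R,0}^{-\alpha_1/2}$) together with the $L^2$-approximation estimate of Lemma \ref{lemma4.3}, and symmetrically for the $y$-direction.

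\textbf{Main obstacle.} The delicate point is the fractional term $\big(I_{\bm x}^{\bar{\bm\alpha}}(\bm\Pi\bm p^n-\bm p^n),\bm\Pi\bm e_{\bm p}^n\big)$: one cannot simply apply Cauchy–Schwarz in $L^2$ because the right factor $\bm\Pi\bm e_{\bm p}^n$ only carries a $J_{R,0}^{-\alpha_i/2}$-seminorm bound coming from the energy $|\cdot|_{\mathcal{A}}$, not a full $L^2$ bound of the needed order. The correct handling is to split $I^{\alpha_i}=I^{\alpha_i/2}\circ I^{\alpha_i/2}$, transfer one half-integral to the other side by the adjoint property, and recognize the resulting form as an inner product in the appropriate fractional space so that Definition \ref{definition2.11} applies. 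Getting this splitting–adjoint–recombination bookkeeping right, and then invoking the embedding $L^2(\mathfrak{I})\hookrightarrow J_{R,0}^{-\alpha_i/2}(\mathfrak{I})$ from Theorem \ref{theorem2.10} so that the projection error in the fractional norm inherits the $\mathcal{O}(h^{k+1})$ rate (hence $h^{2k+2}$ after squaring), is the heart of the argument; the remaining six terms are routine applications of projection approximation, trace inequalities, and Young's inequality.
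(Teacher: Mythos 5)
Your overall decomposition into seven terms and the treatment of most of them match the paper's proof: the two pure $\mathbbm{c}$-terms $(\bm\Pi\bm\sigma^n-\bm\sigma^n,-\bm\Pi\bm e_{\bm p}^n)$ and $(\bm\Pi\bm p^n-\bm p^n,\bm\Pi\bm e_{\bm\sigma}^n)$ vanish by orthogonality; the penalty terms and the edge part of $\mathbbm{a}(\bm\Pi\bm\sigma^n-\bm\sigma^n,\Pi e_u^n)$ are handled by Young's inequality plus the trace estimate of Lemma~\ref{lemma4.3} (Lemma~\ref{lemma4.4} is not needed and would in fact degrade the rate if applied to $\llbracket\Pi e_u^n\rrbracket$; the jump is simply kept and absorbed into the $\epsilon_1$-penalty). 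For the fractional term the paper is even more direct than your splitting $I^{\alpha_i}=I^{\alpha_i/2}\circ I^{\alpha_i/2}$: it moves the \emph{whole} integral across by Lemma~\ref{lemma2.3}, pairs $\|\Pi p_x^n-p_x^n\|_{L^2}$ with $\|{}_x\!I_b^{\alpha_1}\Pi e_{p_x}^n\|_{L^2}=\|\Pi e_{p_x}^n\|_{J_{R}^{-\alpha_1}}$, and invokes the embedding $J_{R,0}^{-\alpha_1/2}\hookrightarrow J_{R,0}^{-\alpha_1}$ of Theorem~\ref{theorem2.10} to pass to the $-\alpha_1/2$ seminorm; your route through the semigroup property yields the same bound, so this part is fine either way.

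The genuine gap is in the term $\mathbbm{a}(\bm\Pi\bm e_{\bm\sigma}^n,\Pi u^n-u^n)$, which you treat as a mirror image of the other $\mathbbm{a}$-term. It is not: its volume part $(\bm\Pi\bm e_{\bm\sigma}^n,\nabla(\Pi u^n-u^n))$ does \emph{not} vanish by $L^2$-orthogonality, because the projection error now sits under a gradient rather than being tested against a polynomial. Nor can you estimate it (or the accompanying flux term $(\{\bm\Pi\bm e_{\bm\sigma}^n\}\cdot\bm n_e,\llbracket\Pi u^n-u^n\rrbracket)_{\mathscr{E}_h^B}$ via Lemma~\ref{lemma4.4}) by Cauchy--Schwarz, since that would require control of $\|\bm\Pi\bm e_{\bm\sigma}^n\|_{L^2(\Omega)}$, and the seminorm $|\cdot|_{\mathcal A}$ controls only the interior jumps of $\bm\Pi\bm e_{\bm\sigma}^n$ (through $\mathbbm{e}$), not its $L^2$ norm; the Gr\"onwall argument of Theorem~\ref{theorem4.8} could not absorb such a term. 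The paper's resolution is to integrate the volume part by parts elementwise, kill $(\nabla\cdot\bm\Pi\bm e_{\bm\sigma}^n,\Pi u^n-u^n)_E$ by orthogonality (the polynomial is now in the correct slot), and combine the resulting element-boundary terms with the flux term so that only $(\llbracket\bm\Pi\bm e_{\bm\sigma}^n\rrbracket,\{\Pi u^n-u^n\}\bm n_e)_{\mathscr{E}_h^i}$ survives; this interior-jump pairing is then absorbed into the $\epsilon_2$-penalty and produces the $\frac{C}{\epsilon_2}h^{2k+1}$ contribution. Without this step your argument does not close. (A minor bookkeeping slip besides: the $\mathbbm{d}$- and $\mathbbm{e}$-terms carry the factors $\epsilon_1,\epsilon_2$ in front, so Young's inequality there yields $C\epsilon_1 h^{2k+1}$ and $C\epsilon_2 h^{2k+1}$, not $C/\epsilon_i$; the $C/\epsilon_i$ contributions come from the two $\mathbbm{a}$-terms. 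This does not affect the final estimate.)
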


\begin{proof}
From the definition of $\mathcal{A}$, we have
\begin{align}\label{eq4.18}
\begin{split}
&\mathcal {A}\big(\Pi u^n-u^n,\bm\Pi\bm\sigma^n-\bm\sigma^n,\bm\Pi\bm p^n-\bm p^n;
\Pi e_u^n,-\bm\Pi \bm e_{\bm p}^n,\bm\Pi\bm e_{\bm\sigma}^n\big)
\\
&\leq\big|\mathbbm{a}(\bm\Pi\bm\sigma^n-\bm\sigma^n,\Pi e_u^n)\big|+\big|\mathbbm{c}(I_{\bm x}^{\bar{\bm\alpha}}(\bm\Pi \bm p^n-\bm p^n),\bm\Pi \bm e_{\bm p}^n)\big|
\\
&~~~~+\big|\mathbbm{a}(\bm\Pi \bm e_{\bm\sigma}^n,\Pi u^n-u^n)\big|
+\big|\mathbbm{d}(\Pi u^n-u^n,\Pi e_u^n)\big|
\\
&~~~~+\big|\mathbbm{e}(\bm\Pi\bm\sigma^n-\bm\sigma^n,\bm\Pi \bm e_{\bm\sigma}^n)\big|+\big|\mathbbm{c}(\bm\Pi\bm\sigma^n-\bm\sigma^n,-\bm\Pi\bm e_{\bm p}^n)\big|
\\
&~~~~+\big|\mathbbm{c}(\bm\Pi\bm p^n-\bm p^n,\bm\Pi\bm e_{\bm\sigma}^n)\big|
\\
&=\sum_{i=1}^7{T}_i.
\end{split}
\end{align}
Using H\"{o}lder's, Young's inequalities and Lemma~\ref{lemma4.3}, we obtain
\begin{align*}
{T}_1&=\big|\mathbbm{a}(\bm\Pi\bm\sigma^n-\bm\sigma^n,\Pi e_u^n)\big|
=\big|(\{\bm\Pi\bm\sigma^n-\bm\sigma^n\}\cdot\bm n_e,\llbracket\Pi e_u^n\rrbracket)_{\mathscr{E}_h^B}\big|
\\
&\leq\sum_{e\in\mathscr{E}_h^B}\parallel\{\bm\Pi\bm\sigma^n-\bm\sigma^n\}\cdot\bm n_e\parallel_{L^2(e)}\parallel\llbracket\Pi e_u^n\rrbracket\parallel_{L^2(e)}
\\
&\leq\sum_{e\in\mathscr{E}_h^B}\big(\frac{1}{\epsilon_1}\parallel\{\bm\Pi\bm\sigma^n-\bm\sigma^n\}\cdot\bm n_e\parallel_{L^2(e)}^2+\frac{\epsilon_1}{4}\parallel\llbracket\Pi e_u^n\rrbracket\parallel_{L^2(e)}^2\big)
\\
&\leq\frac{C}{\epsilon_{1}}h^{2k+1}+\frac{\epsilon_{1}}{4}\sum_{e\in\mathscr{E}_h^B}\parallel\llbracket\Pi e_u^n\rrbracket\parallel_{L^2(e)}^2.
\end{align*}
From Lemma~\ref{lemma2.3}, Lemma~\ref{lemma4.3}, Definition~\ref{definition2.7}, Definition~\ref{definition2.11}, and Theorem~\ref{theorem2.10}, it follows that
\begin{align*}
\begin{split}
{T}_2&=\big|\mathbbm{c}(I_{\bm x}^{\bar{\bm\alpha}}(\bm\Pi\bm p^n-\bm p^n),\bm\Pi\bm e_{\bm p}^n)\big|
\\
&=\big|(\Pi p_x^n- p_x^n,{_x\!}I^{\alpha_1}_b\Pi e_{p_x}^n)+(\Pi p_y^n- p_y^n,{_y\!}I^{\alpha_2}_d\Pi e_{p_y}^n)\big|
\\
&\leq\parallel\Pi p_x^n- p_x^n\parallel_{L^2(\Omega)}\left(\int_c^d \parallel\Pi e_{p_x}^n(\cdot,y)\parallel^2_{J_{R,0}^{-\alpha_1}(a,b)}dy\right)^{\frac{1}{2}}
\\
&~~~~+\parallel\Pi p_y^n- p_y^n\parallel_{L^2(\Omega)}\left(\int_a^b \parallel\Pi e_{p_y}^n(x,\cdot)\parallel^2_{J_{R,0}^{-\alpha_2}(c,d)}dx\right)^{\frac{1}{2}}
\\
&\leq C\parallel\Pi p_x^n- p_x^n\parallel_{L^2(\Omega)}\left(\int_c^d \parallel\Pi e_{p_x}^n(\cdot,y)\parallel^2_{J_{R,0}^{-\alpha_1/2}(a,b)}dy\right)^{\frac{1}{2}}
\\
&~~~~+C\parallel\Pi p_y^n- p_y^n\parallel_{L^2(\Omega)}\left(\int_a^b \parallel\Pi e_{p_y}^n(x,\cdot)\parallel^2_{J_{R,0}^{-\alpha_2/2}(c,d)}dx\right)^{\frac{1}{2}}
\\
&\leq \frac{C}{\epsilon_{\alpha_1}}h^{2k+2}+C\epsilon_{\alpha_1}\int_c^d
\parallel\Pi e_{p_x}^n(\cdot,y)\parallel^2_{J_{R,0}^{-\alpha_1/2}(a,b)}dy
\\
&~~~~+\frac{C}{\epsilon_{\alpha_2}}h^{2k+2}+C\epsilon_{\alpha_2}\int_a^b
\parallel\Pi e_{p_y}^n(x,\cdot)\parallel^2_{J_{R,0}^{-\alpha_2/2}(c,d)}dx,
\end{split}
\end{align*}
where $\epsilon_{\alpha_1}$ and $\epsilon_{\alpha_2}$ are chosen as sufficiently small numbers such that $C\epsilon_{\alpha_1}\leq \cos(\alpha_1\pi/2)$ and $C\epsilon_{\alpha_2}\leq \cos(\alpha_2\pi/2)$.

Integrating the first term of $\mathbbm{a}(\bm\Pi\bm e_{\bm\sigma}^n,\Pi u^n-u^n)$ by parts, and using
the orthogonal property of projection operator $\bm\Pi$, we get
\begin{align*}
{T}_3&=\big|\mathbbm{a}(\bm\Pi\bm e_{\bm\sigma}^n,\Pi u^n-u^n)\big|
\\
&=\big|\big(\bm\Pi\bm e_{\bm\sigma}^n,\nabla(\Pi u^n-u^n)\big)-\big(\{\bm\Pi\bm e_{\bm \sigma}^n\}\cdot \bm n_e,\llbracket\Pi u^n-u^n\rrbracket\big)_{\mathscr{E}_h^B}\big|
\\
&=\big|\big(\llbracket\bm\Pi\bm e_{\bm\sigma}^n\rrbracket,\{\Pi u^n-u^n\}\bm n_e\big)_{\mathscr{E}_h^i}\big|
\\
&\leq \sum_{e\in\mathscr{E}_h^i}\parallel\llbracket\bm\Pi\bm e_{\bm\sigma}^n\rrbracket\parallel_{L^2(e)}
\parallel\{\Pi u^n-u^n\}\bm n_e\parallel_{L^2(e)}
\\
&\leq \sum_{e\in\mathscr{E}_h^i}\left(\frac{1}{\epsilon_2}\parallel\{\Pi u^n-u^n\}\bm n_e\parallel_{L^2(e)}^2+
\frac{\epsilon_2}{4}\parallel\llbracket\bm\Pi\bm e_{\bm\sigma}^n\rrbracket\parallel_{L^2(e)}^2\right)
\\
&\leq \frac{C}{\epsilon_{2}}h^{2k+1}+\frac{\epsilon_{2}}{4}\sum_{e\in\mathscr{E}_h^i}\parallel
\llbracket\bm\Pi\bm e_{\bm\sigma}^n\rrbracket\parallel_{L^2(e)}^2.
\end{align*}
With the same deduction of ${T}_1$, there is
\begin{align*}
{T}_4&=\big|\mathbbm{d}(\Pi u^n-u^n,\Pi e_u^n)\big|
 \\
&\leq \epsilon_{1}\sum_{e\in\mathscr{E}_h^B}\parallel\llbracket\Pi u^n-u^n\rrbracket\parallel_{L^2(e)}\parallel\llbracket\Pi e_u^n\rrbracket\parallel_{L^2(e)}
\\
&\leq \epsilon_{1}\sum_{e\in\mathscr{E}_h^B}\left(\parallel\llbracket\Pi u^n-u^n\rrbracket\parallel_{L^2(e)}^2+\frac{1}{4}\parallel\llbracket\Pi e_u^n\rrbracket\parallel_{L^2(e)}^2\right)
\\
&\leq \epsilon_{1}Ch^{2k+1}+\frac{\epsilon_{1}}{4}\sum_{e\in\mathscr{E}_h^B}\parallel\llbracket\Pi e_u^n\rrbracket\parallel_{L^2(e)}^2.
\end{align*}
By Lemma \ref{lemma4.3}, we get
\begin{align*}
{T}_5&=\big|\mathbbm{e}(\bm\Pi\bm \sigma^n-\bm\sigma^n,\bm\Pi\bm e_{\bm\sigma}^n)\big|
\\
&\leq \epsilon_{2}\sum_{e\in\mathscr{E}_h^i}\parallel\llbracket\bm\Pi\bm\sigma^n-\bm\sigma^n\rrbracket
\parallel_{L^2(e)}\parallel\llbracket\bm\Pi\bm e_{\bm\sigma}^n\rrbracket\parallel_{L^2(e)}
\\
&\leq \epsilon_{2}\sum_{e\in\mathscr{E}_h^i}\left(\parallel\llbracket\bm\Pi\bm\sigma^n-\bm\sigma^n\rrbracket
\parallel_{L^2(e)}^2+\frac{1}{4}\parallel\llbracket\bm\Pi\bm e_{\bm\sigma}^n\rrbracket\parallel_{L^2(e)}^2\right)
\\
&\leq C\epsilon_{2}h^{2k+1}+\frac{\epsilon_2}{4}\sum_{e\in\mathscr{E}_h^i}\parallel
\llbracket\bm\Pi\bm e_{\bm\sigma}^n\rrbracket\parallel_{L^2(e)}^2.
\end{align*}

Note that ${T}_6$ and ${T}_7$ vanish because of the orthogonal property of the projection $\bm\Pi$.
Substituting ${T}_i,i=1,\cdots,7$ into (\ref{eq4.18}), the desired result is obtained.
\end{proof}

\subsubsection{Error bounds}
Assuming that
the solution of (\ref{eq1}) is sufficiently regular, we have the following error estimates.

\begin{thm}\label{theorem4.8}
Let $(u^n,\bm\sigma^n,\bm p^n)$ be the exact solution of (\ref{eqloworder}), $(u_h^n,~\bm\sigma_h^n,~\bm p_h^n)$ the numerical solution of the fully discrete HDG scheme (\ref{eq3.5}). If $\bm b\in L^\infty(J;W^{1,\infty}(\Omega)^2)$, for any integer $N=1,2,\cdots$, there is
\begin{align}
\begin{split}
&\|u^N-u_h^N\|_{L^2(\Omega)}^2+\Delta t\sum_{n=1}^N\Big(\epsilon_1\sum_{e\in\mathscr{E}_h^B}
\|\llbracket u^n-u_h^n\rrbracket\|_{L^2(e)}^2+\epsilon_2\sum_{e\in\mathscr{E}_h^i}
\|\llbracket\bm\sigma^n-\bm\sigma_h^n\rrbracket\|_{L^2(e)}^2
\\
&2K_{\alpha_1}\int_c^d \|(p_x^n-p_{xh}^n)(\cdot,y)\|^{2}_{J_{R,0}^{-\alpha_1/2}(a,b)}dy+2K_{\alpha_2}\int_a^b \| (p_y^n-p_{yh}^n)(x,\cdot)\|^{2}_{J_{R,0}^{-\alpha_2/2}(c,d)}dx\Big)
\\
&\leq C(\Delta t)^2\sum_{n=1}^N\|\partial_{\tau\tau}u\|^2_{L^2({J}^n;L^2(\Omega))}+C\sum_{n=1}^N
\|\partial_t(\Pi u-u)\|_{L^2({J}^n;L^2(\Omega))}^2
\\
& ~~~~ +C_\epsilon h^{2k+1}
+C\Delta t\sum_{n=1}^N
\mid\Pi u^{n-1}-u^{n-1}\mid_{H^1(\Omega)}^2,
\end{split}
\end{align}
where $\alpha_1=2-\alpha$, $\alpha_2=2-\beta$,  $K_{\alpha_1}=\cos(\alpha_1\pi/2)-C\epsilon_{\alpha_1}\geq0, K_{\alpha_2}=\cos(\alpha_2\pi/2)-C\epsilon_{\alpha_2}\geq0$, $\epsilon_{\alpha_1}$ and $\epsilon_{\alpha_2}$ are chosen as above, $C_\epsilon$ is dependent of $\epsilon_1,~\epsilon_2$.
\end{thm}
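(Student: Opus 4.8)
The plan is to combine the three ingredients already assembled: the energy identity of the Lemma containing equation (\ref{erro}), the lower bound for the characteristic term (Theorem \ref{theorem4.6}), and the upper bound for the right-hand side term (Theorem \ref{theorem4.7}). First I would start from (\ref{erro}) written at time level $n$, namely
\begin{align*}
\big(\psi^n\partial_\tau u^n-\tfrac{u_h^n-\check{u}_h^{n-1}}{\Delta t},\Pi e_u^n\big)
+\big|(\Pi e_u^n,\bm\Pi\bm e_{\bm\sigma}^n,\bm\Pi\bm e_{\bm p}^n)\big|^2_{\mathcal{A}}
=\mathcal{A}\big(\Pi u^n-u^n,\ldots;\Pi e_u^n,-\bm\Pi\bm e_{\bm p}^n,\bm\Pi\bm e_{\bm\sigma}^n\big),
\end{align*}
and substitute the lower bound of Theorem \ref{theorem4.6} for the first term on the left and the upper bound of Theorem \ref{theorem4.7} for the term on the right. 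Recalling the explicit form of $|\cdot|_{\mathcal{A}}^2$ in (\ref{eq3.7}), the two penalty contributions $\tfrac{\epsilon_1}{2}\sum_{e}\|\llbracket\Pi e_u^n\rrbracket\|^2_{L^2(e)}$ and $\tfrac{\epsilon_2}{2}\sum_e\|\llbracket\bm\Pi\bm e_{\bm\sigma}^n\rrbracket\|^2_{L^2(e)}$ appearing on the right-hand side are absorbed into the corresponding full-strength terms $\epsilon_1\sum_e\|\llbracket\Pi e_u^n\rrbracket\|^2$ and $\epsilon_2\sum_e\|\llbracket\bm\Pi\bm e_{\bm\sigma}^n\rrbracket\|^2$ on the left, leaving halves; similarly the terms $C\epsilon_{\alpha_1}\int_c^d\|\Pi e_{p_x}^n\|^2_{J^{-\alpha_1/2}_{R,0}}$ and $C\epsilon_{\alpha_2}\int_a^b\|\Pi e_{p_y}^n\|^2_{J^{-\alpha_2/2}_{R,0}}$ are absorbed by the $\cos(\alpha_i\pi/2)$-weighted fractional seminorms in (\ref{eq3.7}), producing the residual constants $K_{\alpha_1},K_{\alpha_2}\ge 0$ by the choice $C\epsilon_{\alpha_i}\le\cos(\alpha_i\pi/2)$ already made in Theorem \ref{theorem4.7}.

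Next I would multiply the resulting inequality by $2\Delta t$ and sum over $n=1,\dots,N$. The telescoping of the $\mathcal{B}_1$-part yields $\|\Pi e_u^N\|^2_{L^2(\Omega)}-\|\Pi e_u^0\|^2_{L^2(\Omega)}$; since $u_h^0=u^0$ we have $\Pi e_u^0=\Pi u^0-u^0$, so the initial term is $O(h^{2k+2})$ and can be folded into the $h^{2k+1}$ budget, or carried along explicitly. The spatial-projection error terms $\|v-\Pi v\|$, $\|\nabla(\Pi u^{n-1}-u^{n-1})\|$ and the boundary-edge contributions are all controlled via Lemma \ref{lemma4.3} (and Lemma \ref{lemma4.4} where a trace of a discrete function is needed), giving powers $h^{2k+2}$ in the interior and $h^{2k+1}$ on edges; collecting these produces $C_\epsilon h^{2k+1}$ with $C_\epsilon$ depending on $\epsilon_1,\epsilon_2,\epsilon_{\alpha_1},\epsilon_{\alpha_2}$. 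The time-consistency term from $\mathcal{B}_2$ contributes $C(\Delta t)^2\sum_n\|\partial_{\tau\tau}u\|^2_{L^2(J^n;L^2(\Omega))}$ after multiplication by $\Delta t$, and the term $\tfrac{C}{\Delta t}\|\partial_t(\Pi u-u)\|^2_{L^2(J^n;L^2(\Omega))}$ becomes $C\|\partial_t(\Pi u-u)\|^2$ after the same multiplication, matching the stated bound.

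The remaining obstacle is the accumulation of the "bad" $L^2(\Omega)$ terms $C\|\Pi e_u^n\|^2_{L^2(\Omega)}$ and $C\|\Pi e_u^{n-1}\|^2_{L^2(\Omega)}$, which after multiplication by $\Delta t$ and summation give $C\Delta t\sum_{n=0}^N\|\Pi e_u^n\|^2_{L^2(\Omega)}$. These are handled by the discrete Gr\"onwall inequality exactly as in the proof of Theorem \ref{theorem4.2}: under the restriction $C\Delta t<1$ the term $C\Delta t\|\Pi e_u^N\|^2$ is moved to the left and absorbed, and Gr\"onwall removes the rest at the cost of the generic constant $C$, leaving $\|\Pi e_u^N\|^2_{L^2(\Omega)}$ plus the full $|\cdot|_{\mathcal{A}}$-seminorm sum on the left and the data-and-approximation terms on the right. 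Finally I would pass from $\Pi e_u^n,\bm\Pi\bm e^n$ back to the true errors $u^n-u_h^n,\bm\sigma^n-\bm\sigma_h^n,\bm p^n-\bm p_h^n$ using the triangle inequality $\|u^n-u_h^n\|\le\|u^n-\Pi u^n\|+\|\Pi e_u^n\|$ and the analogous splittings for the edge jumps and the fractional seminorms (noting $\llbracket u^n\rrbracket=0$ on $\mathscr{E}_h^B$ and $\llbracket\bm\sigma^n\rrbracket=0$ on $\mathscr{E}_h^i$ so that the projection-error jump terms are again $O(h^{2k+1})$ by Lemma \ref{lemma4.3}), absorbing all these extra projection errors into $C_\epsilon h^{2k+1}$ and the $\partial_t(\Pi u-u)$ term. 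The main delicacy to watch is keeping the $\epsilon$-dependences bookkept correctly so that the absorption into $|\cdot|^2_{\mathcal{A}}$ is legitimate (i.e. the coefficients $\tfrac{\epsilon_i}{2}$, $C\epsilon_{\alpha_i}$ are strictly less than the left-hand coefficients $\epsilon_i$, $\cos(\alpha_i\pi/2)$) before Gr\"onwall is applied.
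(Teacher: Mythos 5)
Your proposal is correct and follows essentially the same route as the paper: substitute the bounds of Theorems \ref{theorem4.6} and \ref{theorem4.7} into (\ref{erro}), absorb the $\epsilon_i/2$ and $C\epsilon_{\alpha_i}$ terms into $|\cdot|^2_{\mathcal{A}}$, multiply by $2\Delta t$, sum, apply the discrete Gr\"onwall inequality, and finish with the triangle inequality. Your handling of the initial term (noting $\Pi e_u^0=\Pi u^0-u^0$ need not vanish when $u_h^0=u^0$ and folding its $O(h^{2k+2})$ size into the budget) is in fact slightly more careful than the paper's assertion that $\Pi e_u^0=0$.
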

\begin{proof}
Substituting the results of Theorem \ref{theorem4.6} and Theorem \ref{theorem4.7} into (\ref{erro}),
there is
\begin{align*}
\begin{split}
&\frac{1}{2\Delta t}\big(\parallel\Pi e_u^n\parallel^2_{L^2(\Omega)}-\parallel\Pi e_u^{n-1}\parallel^2_{L^2(\Omega)}\big)+\frac{\epsilon_1}{2}\sum_{e\in\mathscr{E}_h^B}\parallel
\llbracket\Pi e_u^n\rrbracket\parallel_{L^2(e)}^2
\\
&~~~~ +\frac{\epsilon_2}{2}\sum_{e\in\mathscr{E}_h^i}
\parallel\llbracket\bm\Pi\bm e_{\bm\sigma}^n\rrbracket\parallel_{L^2(e)}^2
+(\cos(\alpha_1\pi/2)-C\epsilon_{\alpha_1})\int_c^d
\|\Pi e_{p_x}^n(\cdot,y)\|_{J_{R,0}^{-\alpha_1/2}(a,b)}^2dy
\\
& ~~~~ +(\cos(\alpha_2\pi/2)-C\epsilon_{\alpha_2})\int_a^b
\|\Pi e_{p_y}^n(x,\cdot)\|_{J_{R,0}^{-\alpha_2/2}(c,d)}^2dx
\\
&\leq C\parallel\Pi e_u^{n-1}\parallel^2_{L^2(\Omega)}+C\parallel\Pi e_u^n\parallel^2_{L^2(\Omega)}+C\Delta t\parallel\partial_{\tau\tau}u\parallel^2_{L^2(J^n;L^2(\Omega))}
\\
&~~~~ +\frac{C}{\Delta t}\|\partial_t(\Pi u-u)\|^2_{L^2(J^n;L^2(\Omega))}
+C\mid\Pi u^{n-1}-u^{n-1}\mid^2_{H^1(\Omega)}+C_\epsilon h^{2k+1}.
\end{split}
\end{align*}
With $\Pi e_u^0=0$, multiplying the above inequality by $2\Delta t$ on both sides, summing
over $n$ from $1$ to $N$, and using the discrete Gr\"{o}nwall inequality, there is
\begin{align*}
&\parallel\Pi e_u^N\parallel^2_{L^2(\Omega)}+\Delta t\sum_{n=1}^N\big(\epsilon_1\sum_{e\in\mathscr{E}_h^B}\parallel
\llbracket\Pi e_u^n\rrbracket\parallel_{L^2(e)}^2+\epsilon_2\sum_{e\in\mathscr{E}_h^i}
\parallel\llbracket\bm\Pi\bm e_{\bm\sigma}^n\rrbracket\parallel_{L^2(e)}^2\big)
\\
&~~~~ +2\Delta t\sum_{n=1}^N(cos(\alpha_1\pi/2)-C\epsilon_{\alpha_1})\int_c^d
\parallel\Pi e_{p_x}^n(\cdot,y)\parallel_{J_{R,0}^{-\alpha_1/2}(a,b)}^2dy
\\
&~~~~ +2\Delta t\sum_{n=1}^N(cos(\alpha_2\pi/2)-C\epsilon_{\alpha_2})\int_a^b
\|\Pi e_{p_y}^n(x,\cdot)\|_{J_{R,0}^{-\alpha_2/2}(c,d)}^2dx
\\
&\leq C(\Delta t)^2\sum_{n=1}^N\|\partial_{\tau\tau}u\|^2_{L^2(J^n;L^2(\Omega))}
+C\sum_{n=1}^N\|\partial_t(\Pi u-u)\|^2_{L^2(J^n;L^2(\Omega))}
\\
&~~~~ +C\Delta t\sum_{n=1}^N\mid\Pi u^{n-1}-u^{n-1}\mid^2_{H^1(\Omega)}+C_\epsilon h^{2k+1}.
\end{align*}
By the triangle inequality, we obtain the desired result.
\end{proof}

\section{Numerical experiment}
In this section, we illustrate the numerical performance of the proposed schemes by the numerical simulations of two
examples. In the first example, we take the vector function $\bm b=\bm 0$ and verify the accuracy of the schemes with the exact smooth solution $u$ combining with the left fractional Riemann-Liouville  derivatives with respect to $x$-variable and $y$-variable, respectively. When we compute the fractional integral part in triangular meshes (see Figures 1-2), the Gauss points and weights are used to deal with the terms relating with the fractional operators element-by-element (see \cite{r8,r30}). Since this part needs more time and memory spaces (see \cite{r22}), we only use the piecewise linear basis functions to simulate the solution in triangular meshes. Tables 1-3 illustrate that the schemes have a good convergence order with
piecewise linear basis function for different choices of the fluxes. In the second example, we take $\bm b$ to be a vector function and perform some numerical experiments with some figures (see Figures 3-4)
which justify that the schemes simulate the solution very well for 2D-fractional convection-diffusion problems.

 \begin{figure}[H]
\begin{minipage}[t]{0.48\linewidth}
\centering
\includegraphics[width=1.8in,height=1.8in]{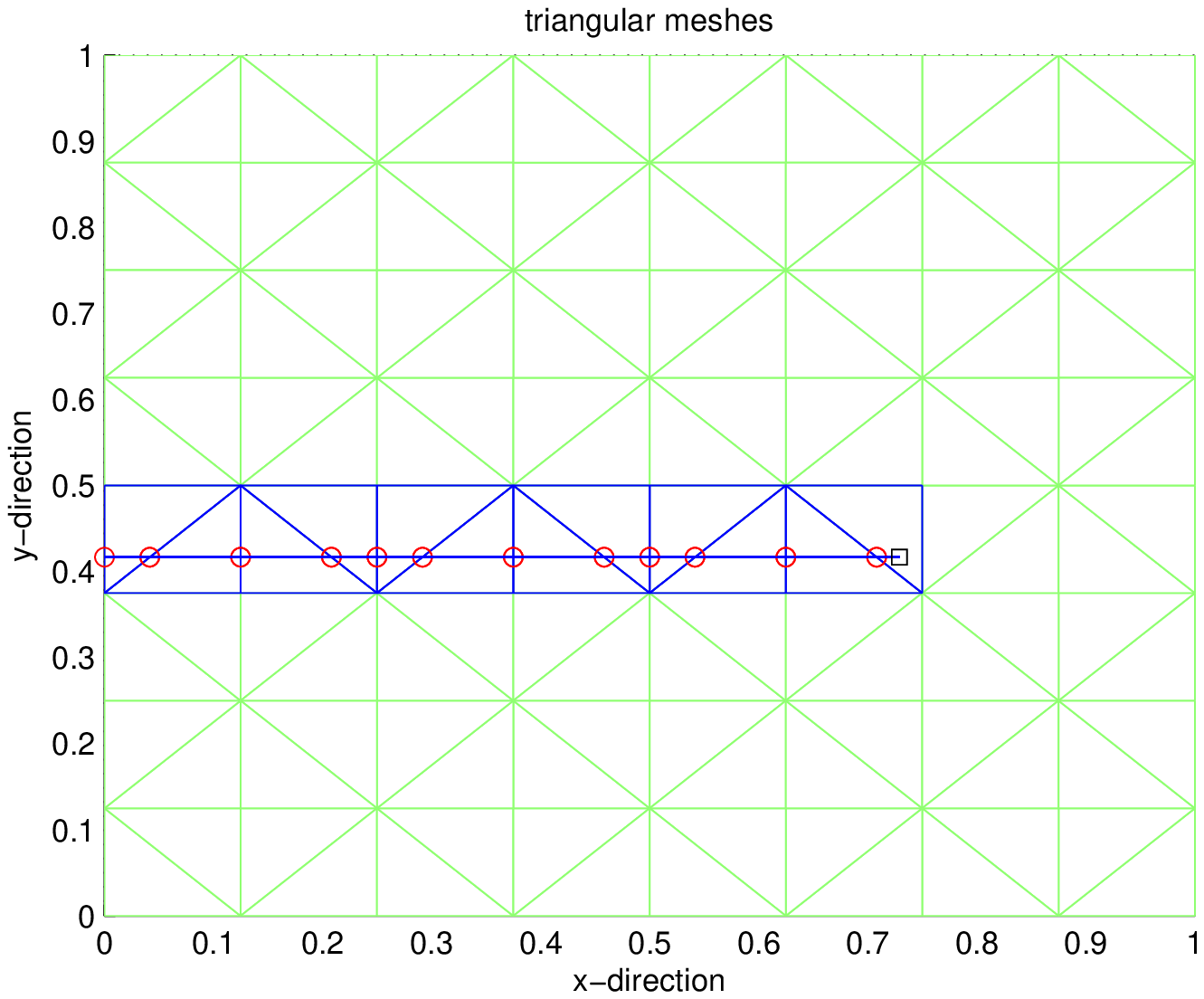}
\caption{All triangles in x-direction affected by the Gauss points (denoted by black square).}
\end{minipage}
\begin{minipage}[t]{0.48\linewidth}
\centering
\includegraphics[width=1.8in,height=1.8in]{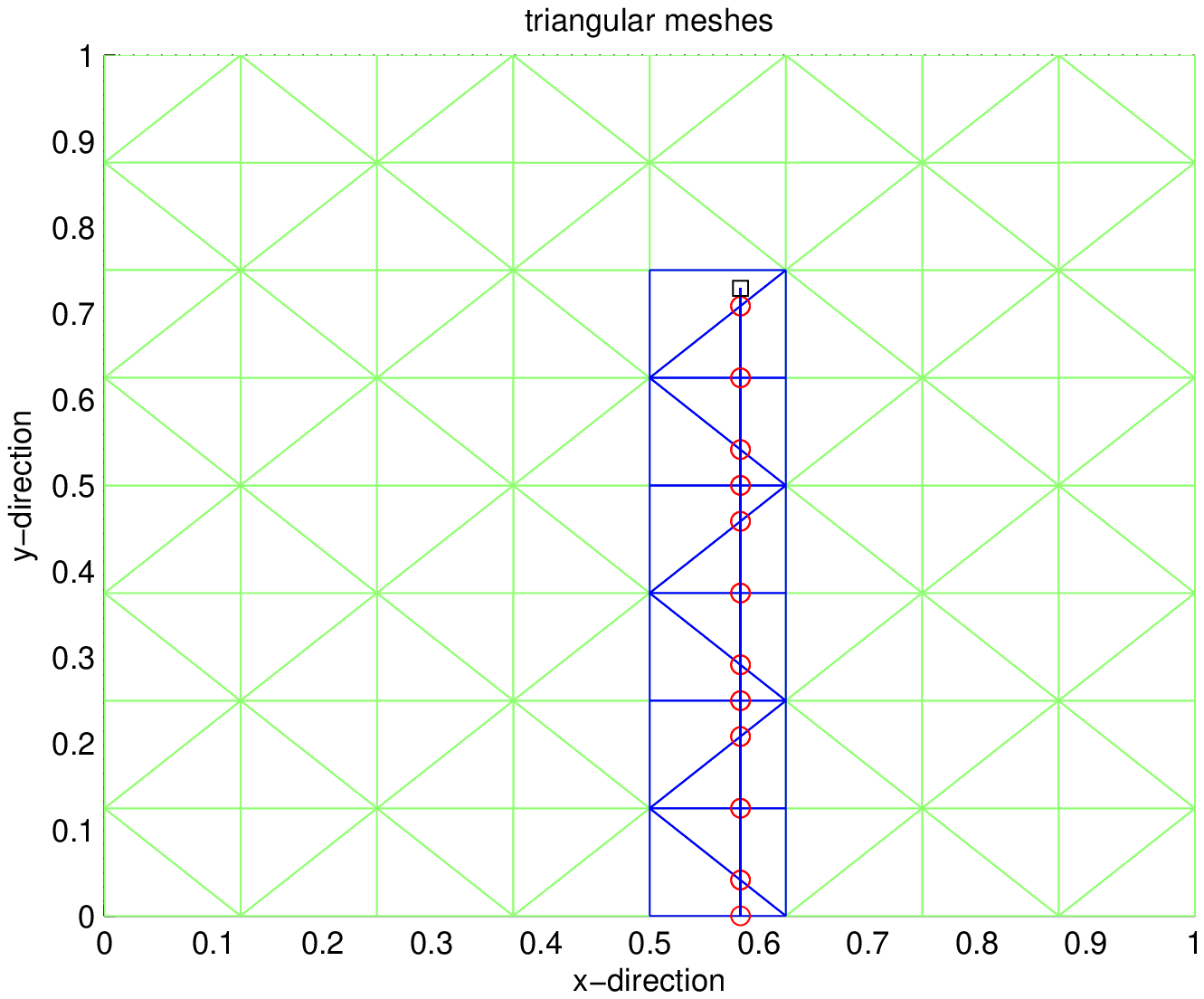}
\caption{All triangles in y-direction affected by the Gauss points (denoted by black square).}
\end{minipage}
\end{figure}
$\textbf{Example 5.1.}$ Consider 2D space-fractional convection-diffusion problem (\ref{eq1}) in domain $\Omega =(0,1)\times(0,1)$. The initial condition and the exact solution are specified as
\begin{equation}
\left\{ \begin{array}
 {l@{\quad} l}
 u(x,y,t)=e^{-t}x^2(x-1)^2y^2(y-1)^2,
 \\
 u_{0}(x,y)=x^2(x-1)^2y^2(y-1)^2,
 \\
 \bm b(x,y,t)=(0,0).
 \end{array}
 \right.
\end{equation}
Then the force term $f$ is determined accordingly from (1.1). In this case, we present a few results to numerically validate the analysis.

For the numerical simulations, in order to validate the stability and the accuracy of the presented HDG scheme, we choose the time-stepsize, $\Delta t=\mathcal{O}(h^{3/2})$, used to advance the discrete formulation from $t^{n-1}$ to $t^n, n=1,2,...,N$. The experimental convergence rate is given by
 $$rate=\frac{\log\big(\parallel u(t)-u_{h_1}(t)\parallel_{L^2(\mathscr{E}_{h_1})}/\parallel u(t)-u_{h_2}(t)\parallel_{L^2(\mathscr{E}_{h_2})}\big)}{\log(h_1/h_2)}.$$

\begin{table}[H]
\centering
  \begin{tabular}{|c|c|c|c|c|c|c|c|}  \hline
  \multicolumn{1}{|c|}{}
&\multicolumn{7}{|c|}{$t=0.1,(\alpha,\beta)=(1.2,1.4),(\epsilon_1,\epsilon_2)=(\mathcal{O}(1),\mathcal{O}(1))$}
\\
\hline
$h$&$\|e_u(t)\|_{L^2}$& $rate$   &$\|e_u(t)\|_{L^1}$& $rate$
   &$\|\partial_x e_u(t)\|_{L^2}$& $rate$   &$\|\partial_y e_u(t)\|_{L^2}$\\ \hline
$1/6$&1.3993e-04&--   &1.0771e-04&--  &1.4273e-03& --  &6.4132e-03 \\

$1/10$&5.6088e-05&1.79 &4.1112e-05&1.89&7.2047e-03&1.34 &6.8133e-03\\

$1/14$&2.9803e-05&1.88 &2.1713e-05&1.90&4.8274e-04&1.19 &6.9191e-03\\

$1/18$&1.8452e-05&1.91 &1.3226e-05&1.97&3.4904e-04&1.29 &6.9660e-03\\
\hline
&\multicolumn{7}{|c|}{$t=0.1,(\alpha,\beta)=(1.5,1.5),(\epsilon_1,\epsilon_2)=(\mathcal{O}(1),\mathcal{O}(1))$}
\\
 \hline
$1/6$&1.8283e-04&--    &1.4041e-04&--  &1.1688e-03& --  &6.4118e-03 \\

$1/10$&7.5004e-05&1.74 &5.6669e-05&1.78&5.1740e-04&1.60 &6.7972e-03\\

$1/14$&4.0967e-05&1.80 &3.1156e-05&1.78&3.2141e-04&1.41 &6.9127e-03\\

$1/18$&2.5475e-05&1.89 &1.9258e-05&1.91&2.2281e-04&1.46 &6.9610e-03\\

\hline
  \multicolumn{1}{|c|}{}
&\multicolumn{7}{|c|}{$t=0.1,(\alpha,\beta)=(1.9,1.6),(\epsilon_1,\epsilon_2)=(\mathcal{O}(1),\mathcal{O}(1))$}
\\
 \hline
$1/6$ &2.6485e-04&--    &1.9290e-04&--  &1.1859e-03& --  &6.4542e-03 \\

$1/10$&1.1544e-04&1.63  &8.6542e-05&1.57&5.3599e-04&1.56 &6.7608e-03\\

$1/14$&6.7712e-05&1.59  &5.0400e-05&1.61&3.3163e-04&1.43 &6.8829e-03\\

$1/18$&4.5463e-05&1.59  &3.3857e-05&1.58&2.3476e-04&1.38 &6.9387e-03\\
\hline
 \end{tabular}
 \caption{The $L^2,L^1$-errors and convergence rates for $u$ and $u_x,u_y$ for Example 5.1.}
 \end{table}
\begin{table}[H]
\centering
  \begin{tabular}{|c|c|c|c|c|c|c|c|}  \hline
  \multicolumn{1}{|c|}{}
&\multicolumn{7}{|c|}{$t=1,(\alpha,\beta)=(1.2,1.4),(\epsilon_1,\epsilon_2)=(\mathcal{O}(1),\mathcal{O}(1))$}
\\
 \hline
$h$&$\| e_u(t)\|_{L^2}$& $rate$   &$\|e_u(t)\|_{L^1}$& $rate$
   &$\| \partial_x e_u(t)\|_{L^2}$& $rate$   &$\| \partial_y e_u(t)\|_{L^2}$\\ \hline
$1/6$  &8.2881e-05&--   &7.2374e-05&--  &4.7733e-04& --  &2.5556e-03\\

$1/10 $&3.2222e-05&1.85 &2.7700e-05&1.88&2.6204e-04&1.17 &2.7560e-03\\

$1/14$ &1.6162e-05&2.05 &1.3515e-05&2.13&1.7748e-04&1.06 &2.8077e-03\\

$1/18$ &9.9448e-06&1.93 &8.2787e-06&1.95&1.3085e-04&1.21 &2.8291e-03\\
\hline
  \multicolumn{1}{|c|}{}
&\multicolumn{7}{|c|}{$t=1,(\alpha,\beta)=(1.5,1.5),(\epsilon_1,\epsilon_2)=(\mathcal{O}(1),\mathcal{O}(1))$}
\\
 \hline
$1/6$ &8.7928e-05&--   &7.4712e-05&--  &4.1510e-04& --  &2.5466e-03 \\

$1/10$&3.5668e-05&1.77 &2.9706e-05&1.81&1.9555e-04&1.47 &2.7408e-03\\

$1/14$&1.8524e-05&1.95 &1.4885e-05&2.05&1.2291e-04&1.38 &2.8010e-03\\

$1/18$&1.1432e-05&1.92 &9.0662e-06&1.97&8.6553e-05&1.40 &2.8244e-03\\
\hline
  \multicolumn{1}{|c|}{}
&\multicolumn{7}{|c|}{$t=1,(\alpha,\beta)=(1.9,1.6),(\epsilon_1,\epsilon_2)=(\mathcal{O}(1),\mathcal{O}(1))$}
\\
 \hline
$1/6$ &1.1250e-04&--   &8.4393e-05&--  &4.6409e-04& --  &2.5983e-03 \\

$1/10$&4.7998e-05&1.67 &3.6510e-05&1.64&2.1614e-04&1.50 &2.7410e-03\\

$1/14$&2.7916e-05&1.61 &2.0971e-05&1.65&1.3512e-04&1.40 &2.7985e-03\\

$1/18$&1.8767e-05&1.58 &1.4104e-05&1.58&9.5762e-05&1.37 &2.8216e-03\\
\hline
\end{tabular}
\caption{The $L^2,L^1$-errors and convergence rates for $u$ and $u_x,u_y$ for Example 5.1.}
\end{table}
\begin{table}[H]
\centering
  \begin{tabular}{|c|c|c|c|c|c|c|c|}  \hline
  \multicolumn{1}{|c|}{}
&\multicolumn{7}{|c|}{$t=1,(\alpha,\beta)=(1.9,1.6),(\epsilon_1,\epsilon_2)=(\mathcal{O}(h^{-1}),\mathcal{O}(1))$}
\\
 \hline
$h$&$\| e_u(t)\|_{L^2}$& $rate$   &$\| e_u(t)\|_{L^1}$& $rate$
   &$\|\partial_x e_u(t)\|_{L^2}$& $rate$   &$\|\partial_y e_u(t)\|_{L^2}$\\ \hline
$1/6$  &5.2142e-05&--   &4.1592e-05&--  &4.0695e-04& --  &2.5777e-03\\

$1/10 $&1.9772e-05&1.90 &1.5784e-05&1.90&1.7761e-04&1.62 &2.7383e-03\\

$1/14$ &9.5805e-06&2.15 &7.5267e-06&2.20&1.1392e-04&1.32 &2.7976e-03\\

$1/18$ &5.8213e-06&1.98 &4.6596e-06&1.91&7.8388e-05&1.49 &2.8210e-03\\

\hline
  \multicolumn{1}{|c|}{}
&\multicolumn{7}{|c|}{$t=1,(\alpha,\beta)=(1.9,1.6),(\epsilon_1,\epsilon_2)=(\mathcal{O}(h^{-1}),\mathcal{O}(h))$}
\\
 \hline
$1/6$ &4.8702e-05&--   &3.9358e-05&--  &4.2857e-04& --  &2.5666e-03 \\

$1/10$&1.9169e-05&1.83 &1.5766e-05&1.79&1.9937e-04&1.50 &2.7356e-03\\

$1/14$&9.2520e-06&2.17 &7.6271e-06&2.16&1.2817e-04&1.31 &2.7962e-03\\

$1/18$&5.6525e-06&1.96 &4.6650e-06&1.96&8.9725e-05&1.42 &2.8202e-03\\
\hline
  \multicolumn{1}{|c|}{}
&\multicolumn{7}{|c|}{$t=1,(\alpha,\beta)=(1.9,1.6),(\epsilon_1,\epsilon_2)=(\mathcal{O}(1),\mathcal{O}(h))$}
\\
 \hline
$1/6$ &8.6286e-05&--   &6.6645e-05&--  &4.5649e-04& --  &2.5761e-03 \\

$1/10$&3.4635e-05&1.79 &2.8021e-05&1.70&2.0660e-04&1.55 &2.7365e-03\\

$1/14$&1.7787e-05&1.98 &1.4393e-05&1.98&1.2993e-04&1.38 &2.7964e-03\\

$1/18$&1.0969e-05&1.92 &8.9170e-06&1.91&8.9646e-05&1.48 &2.8200e-03\\
\hline
\end{tabular}
\caption{The $L^2,L^1$-errors and convergence rates for $u$ and $u_x,u_y$ for Example 5.1.}
\end{table}
In Table 1 and Table 2 we choose different observation time $t=0.1,1$ and $\alpha,\beta$ to justify that the
convergence rates at least have an order of $\mathcal{O}(h^{3/2})$ for the solution $u$ in $L^2,~L^1$-norms
based on the piecewise linear basis function. In Table 3 we take the same choice of $\epsilon_1,\epsilon_2$ as Ref. \cite{r4} and see that the convergence rates increase to $\mathcal{O}(h^2)$ (see the explanations in Ref. \cite{r4}). Comparing the numerical results with the work \cite{r30}, we can see that the HDG method has smaller numerical errors for the first order polynomial approximation.

\textbf{Example 5.2.} In this example, we investigate the approximation solution of problem (\ref{eq1}). For
convenience, we still choose the domain $\Omega =(0,1)\times(0,1)$. The exact solution $u$, initial value and the vector function $\bm b$ are given by
 \begin{equation}
 \begin{cases}
 {u(x,y,t)=}
 e^{-t}x^2(x-0.5)^2(x-1)^2y^2(y-0.5)^2(y-1)^2,
 \\
 u_0(x,y)=x^2(x-0.5)^2(x-1)^2y^2(y-0.5)^2(y-1)^2,
 \\
\bm b=((x-0.5),-(y-0.5)).
 \end{cases}
 \end{equation}
For the second example, in order to further support the theoretical convergence and justify
the powerful HDG scheme, we take $\bm b$ to be nonzero vector function and give some approximation solutions with the refining space-step $h$ to compare with the exact solutions and display the efficiency of the simulations.


\begin{figure}[H]
\subfigure[]{
\begin{minipage}[t]{0.3\linewidth}
\centering
\includegraphics[width=1.1in,height=1.1in]{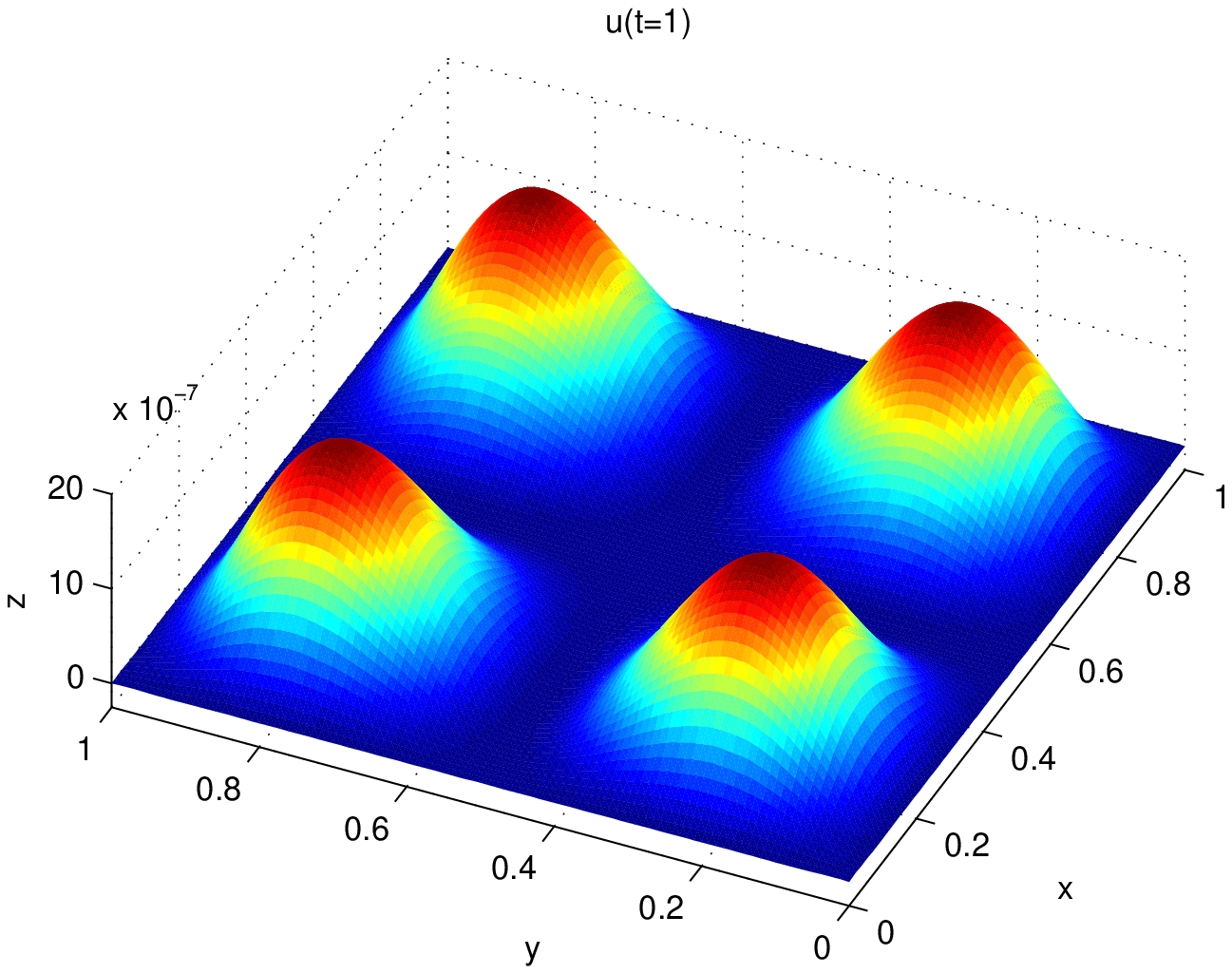}
\end{minipage}
}
\subfigure[]{
\begin{minipage}[t]{0.3\linewidth}
\centering
\includegraphics[width=1.10in,height=1.10in]{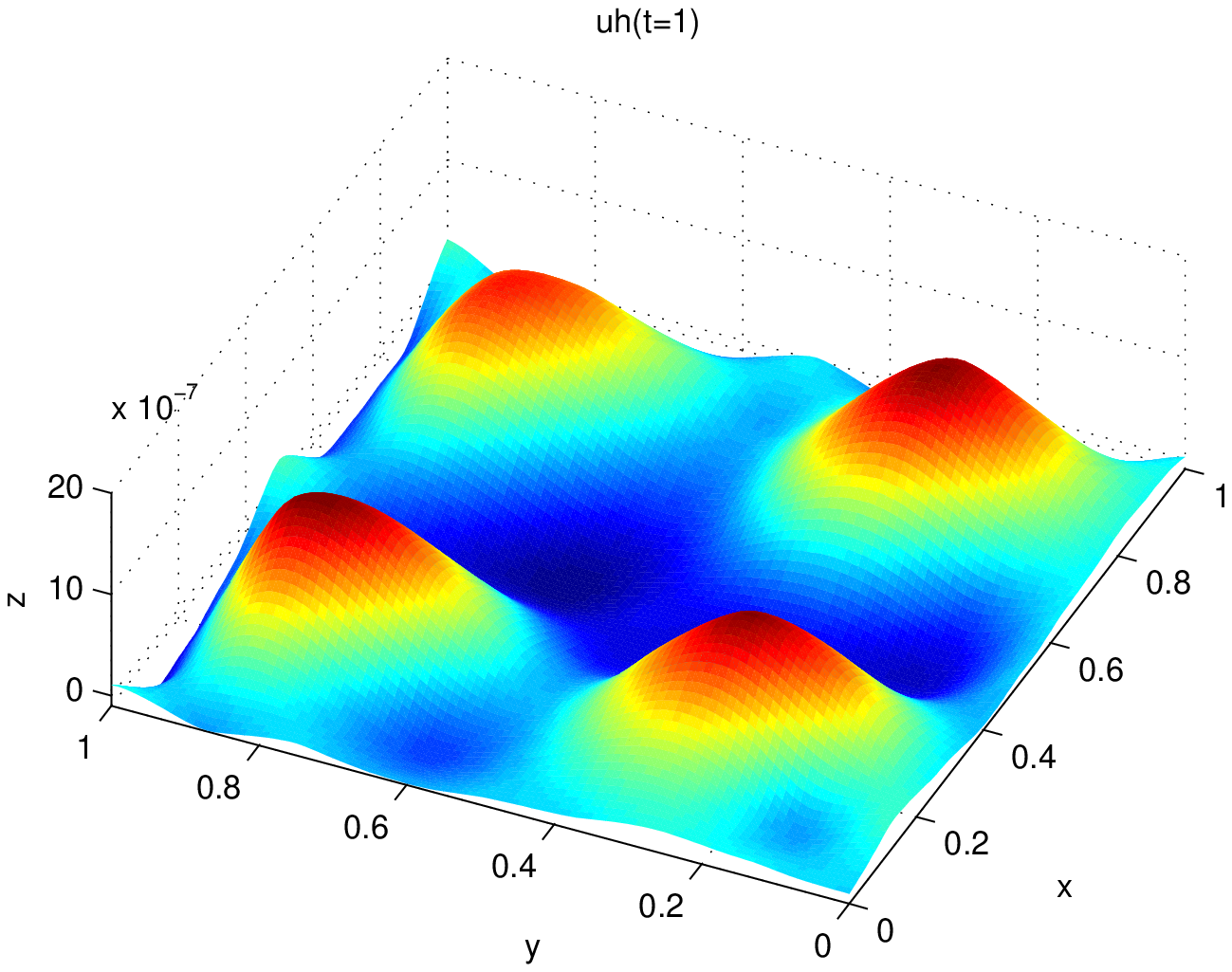}
\end{minipage}
}
\subfigure[]{
\begin{minipage}[t]{0.3\linewidth}
\centering
\includegraphics[width=1.10in,height=1.10in]{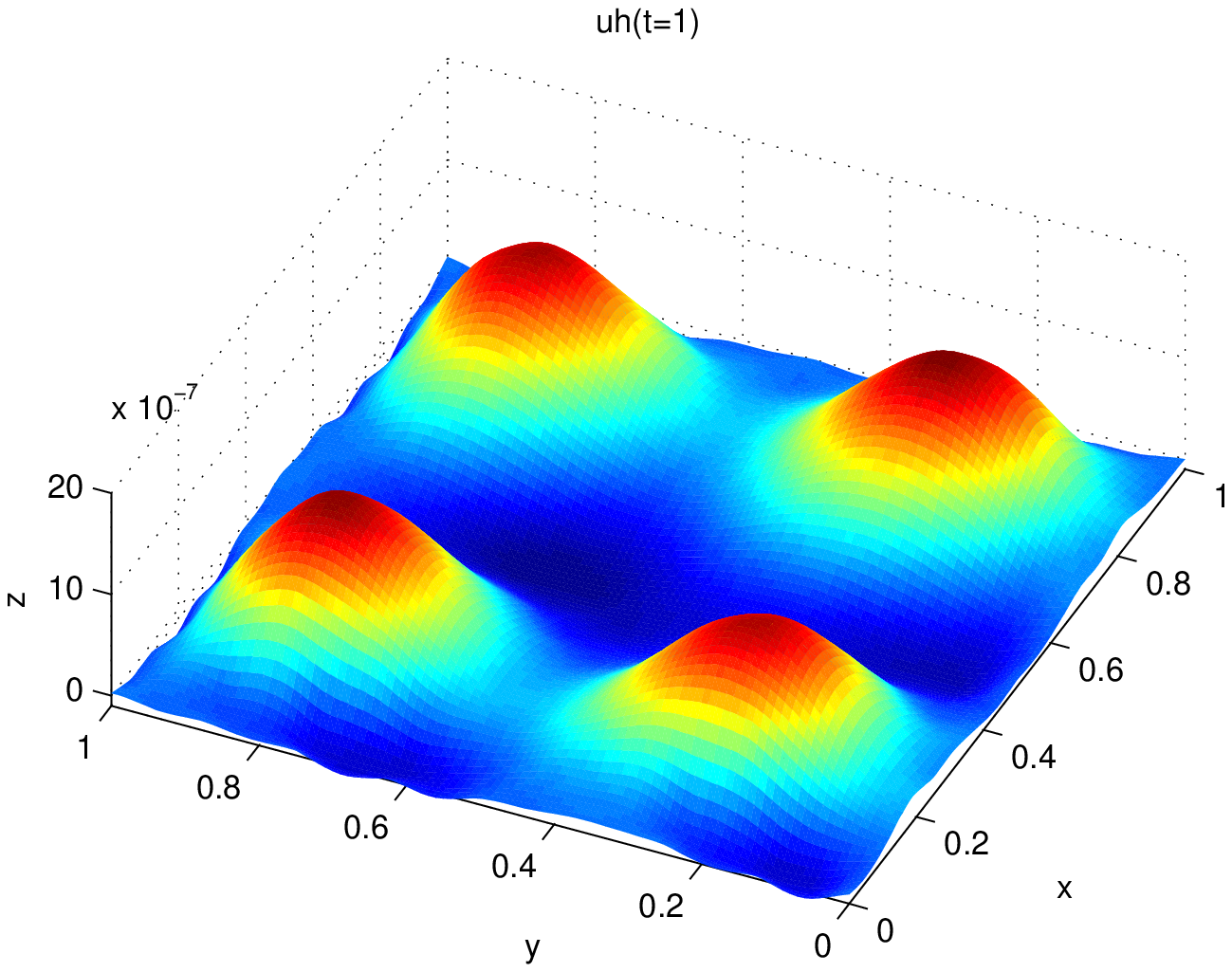}
\end{minipage}
}
\caption{Exact solution $u$ and the numerical solutions $u_h$ at $t=1$ for Example 5.2.} \label{fig:1}
\end{figure}

\begin{figure}[H]
\subfigure[]{
\begin{minipage}[t]{0.3\linewidth}
\centering
\includegraphics[width=1.1in,height=1.1in]{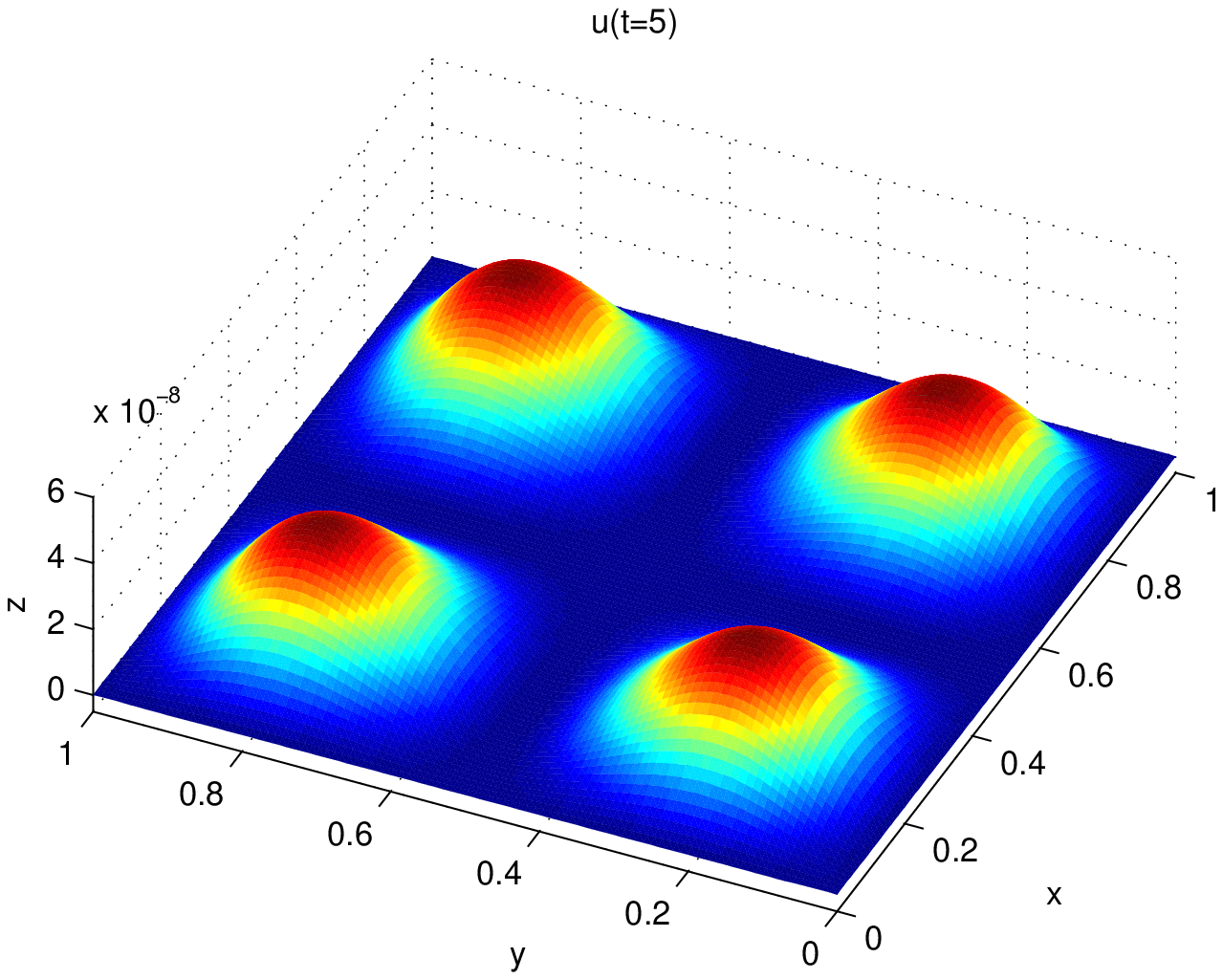}
\end{minipage}
}
\subfigure[]{
\begin{minipage}[t]{0.3\linewidth}
\centering
\includegraphics[width=1.10in,height=1.10in]{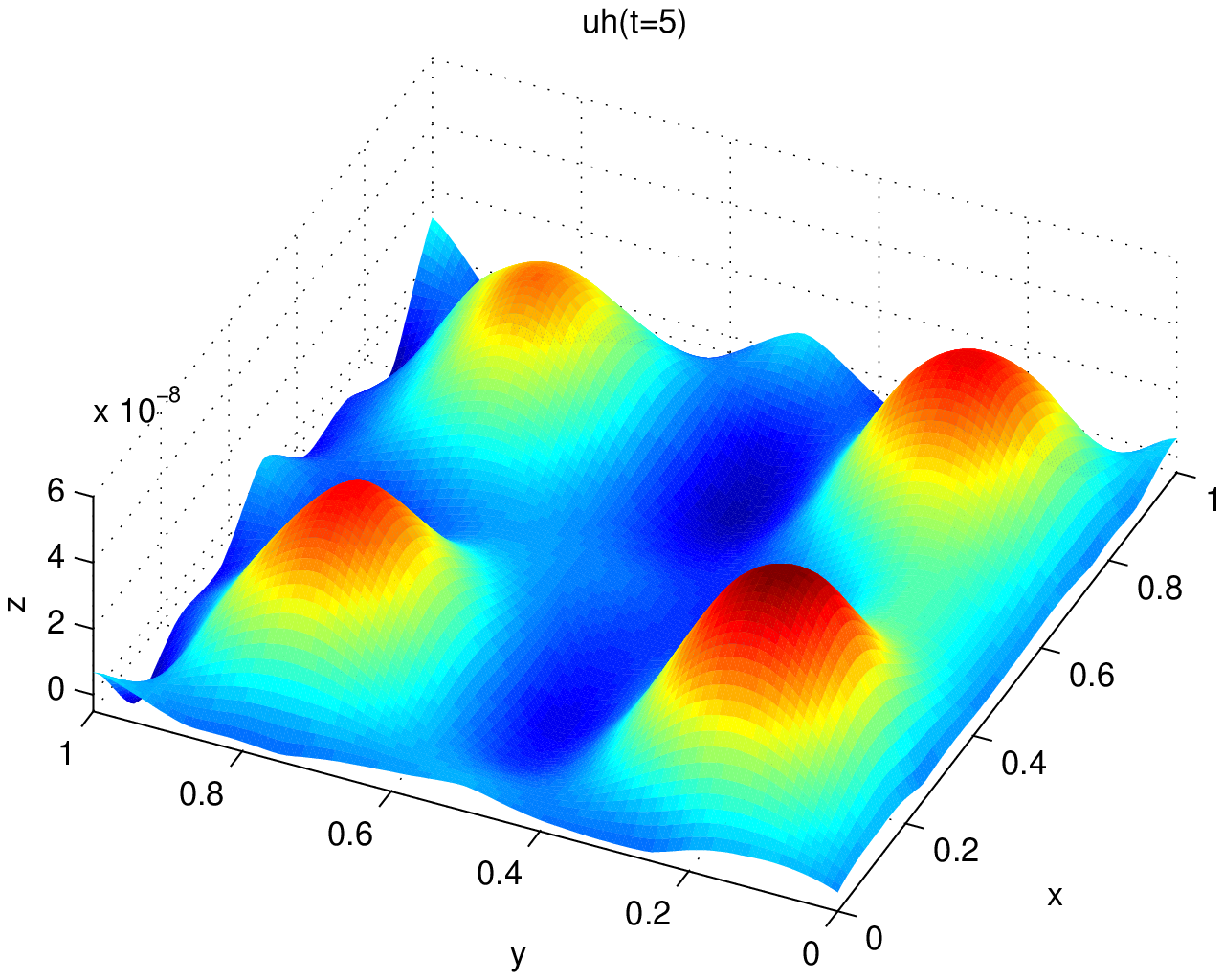}
\end{minipage}
}
\subfigure[]{
\begin{minipage}[t]{0.3\linewidth}
\centering
\includegraphics[width=1.10in,height=1.10in]{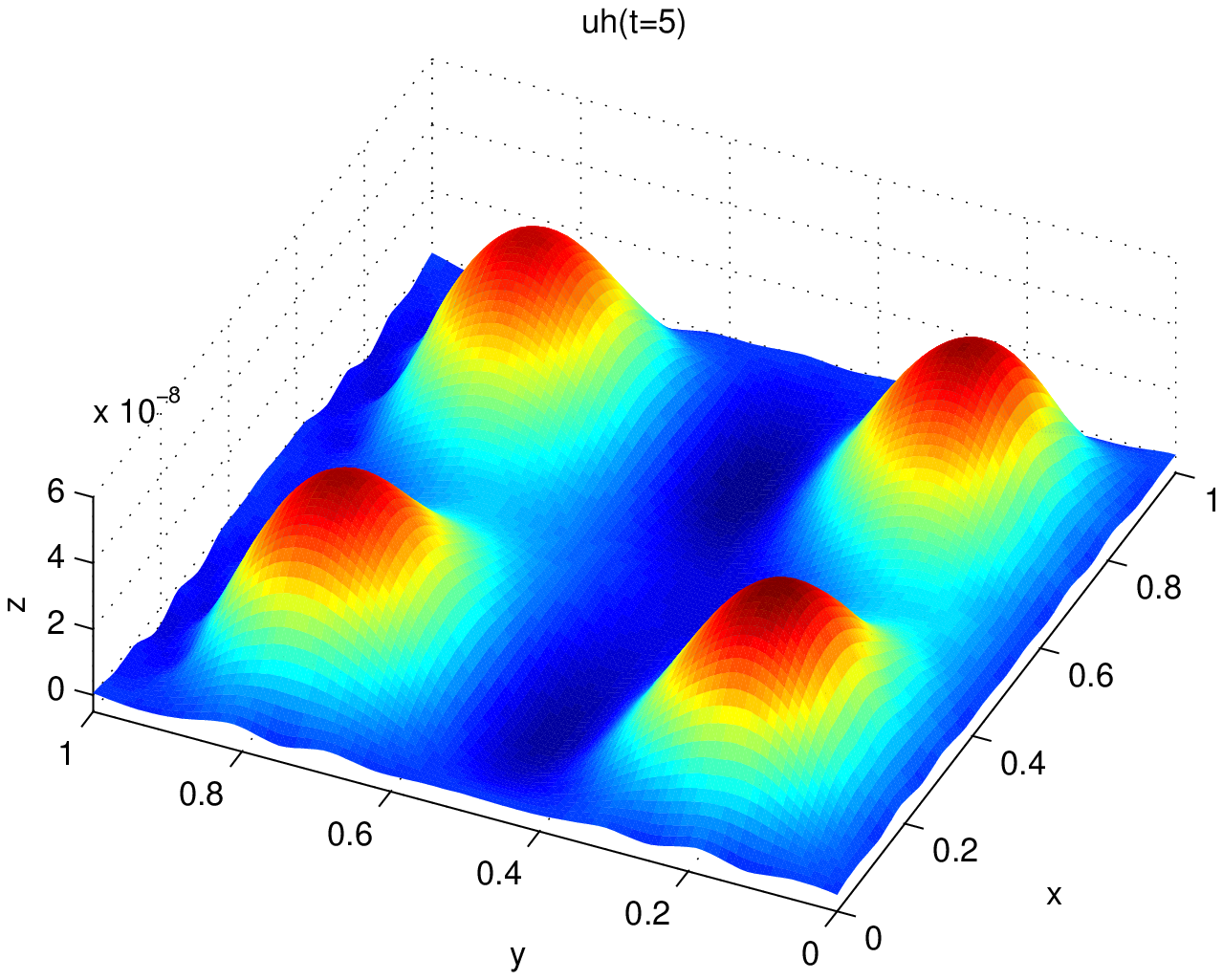}
\end{minipage}
}
\caption{Exact solution $u$ and the numerical solutions $u_h$ at $t=5$ for Example 5.2.} \label{fig:1}
\end{figure}
Figure 3 displays the exact solution $u$ and the numerical solutions $u_h$ based on different space stepsizes $h=\frac{1}{8},\frac{1}{16}$ at $t=1$ with $\alpha=1.2,\beta=1.4$, $\epsilon_1=\epsilon_2=1$. Figure 4 displays the exact solution $u$ and the numerical solutions $u_h$ based on different space stepsizes $h=\frac{1}{8},\frac{1}{16}$
at $t=5$ with $\alpha=1.9,\beta=1.6,\epsilon_1=h^{-1},\epsilon_2=1$. It is clear that the exact solution of Example 5.2 is nonnegative with four hills. In the simulations, the $P^1$-HDG solutions recover the exact solution perfectly with all four hills in coarse meshes. Note that the numerical results display that the approximations are more and more accurate with the refining of the meshes.
%
\section{Conclusions}
By carefully introducing the auxiliary variables, constructing the numerical fluxes, adding the penalty terms, and using the characteristic method to deal with the time derivative and convective term, we design the effective HDG schemes to solve 2D space-fractional convection-diffusion equations with triangular meshes. As we know, this work is the first time to deal two-dimensional space-fractional convection-diffusion equation with triangular mesh by the DG method. The stability and error bounds analysis are investigated.

Besides the general advantages of HDG method, the presented scheme is shown to have the following benefits: 1) it is symmetric, so easy to deal with the fractional operators; 2) theoretically, the stability can be more easily proved; 3) the penalty terms make the error analysis more convenient; 4) numerically verified to have efficient approximations; 5) the schemes are performed very well in triangular meshes; 6) it is possible to use this scheme to solve nonlinear equations which is the future research task.

\begin{acknowledgements}
This work was partially supported by the National Basic Research (973) Program
 of China under Grant 2011CB706903, the National Natural Science Foundation of China under Grant 11271173 and 11471150, and the CAPES and CNPq in Brazil.
\end{acknowledgements}

\end{document}